\documentclass[10pt, a4paper]{amsart} 
\usepackage[left=2.50cm, right=2.50cm, top=2.50cm, bottom=2.50cm]{geometry} 
\usepackage{amscd, amsfonts, amsmath, amssymb, amsthm, arydshln, fixmath, graphicx, mathrsfs, overpic, tikz, tikz-cd}
\usepackage{hyperref}
\usepackage[all]{xy} 
\usepackage{tikz}
\usepackage{tikz}
\usetikzlibrary{arrows}
\usetikzlibrary{decorations.pathmorphing}
\usepackage{dsfont}
\makeindex

\theoremstyle{definition} 
\newtheorem{Unity}{Unity}[section] 
\newtheorem*{Definition*}{Definition} 
\newtheorem{Definition}[Unity]{Definition} 

\theoremstyle{plain} 
\newtheorem*{Theorem*}{Theorem}
\newtheorem{Theorem}[Unity]{Theorem}
\newtheorem{Proposition}[Unity]{Proposition}
\newtheorem{Corollary}[Unity]{Corollary}
\newtheorem{Lemma}[Unity]{Lemma}

\theoremstyle{remark} 
\newtheorem*{Remark*}{Remark}

\numberwithin{Unity}{section}

\newcommand{\GL}{\mathrm{GL}}
\newcommand{\Ima}{\mathrm{Im}}

\newcommand{\Hom}{\mathrm{Hom}}

\newcommand{\peri}{\mathrm{per}}

\newcommand{\chara}{\mathrm{char\,}}
\newcommand{\orb}{\mathrm{orb}}
\newcommand{\ord}{\mathrm{ord}}
\newcommand{\OBT}{\mathrm{OBT}}

\begin{document}

\title{On the number of Frobenius periodic vector bundles on elliptic curves}
\author{Lingguang Li}
\address{School of Mathematical Sciences,
Key Laboratory of Intelligent Computing and Applications (Tongji University), Ministry of Education, Shanghai 200092, CHINA}
\email{LiLg@tongji.edu.cn}
\author{Niantao Tian}
\address{School of Mathematical Sciences,
Key Laboratory of Intelligent Computing and Applications (Tongji University), Ministry of Education, Shanghai 200092, CHINA}
\email{tianniantao@tongji.edu.cn}
\begin{abstract} 
This paper counts Frobenius-periodic vector bundles on elliptic curves over an algebraically closed field of characteristic $p>0$. By translating the problem into continuous representations of the étale fundamental group, it derives explicit generating functions and exact-period formulas, with separate treatments of the ordinary and supersingular cases.
\end{abstract}
\maketitle

\section{Introduction}
Let $k$ be an algebraically closed field of characteristic $p>0$, $X/k$ an elliptic curve with absolute Frobenius morphism $F_X:X\rightarrow X$. A vector bundle $E$ on $X$ is called Frobenius-periodic if $(F_X^n)^*E\simeq E$ for some $n>0$. In this paper we study Frobenius-periodic vector bundles not through the geometry of the whole moduli space, but through finite monodromy representations of the \'etale fundamental group. More precisely, for fixed $n$ and $r$, we count the isomorphism classes of rank $r$ vector bundles satisfying $(F_X^n)^*E\simeq E$, and then extract the number of those whose minimal Frobenius period is exactly $n$.

The starting point is the theorem of Lange and Stuhler \cite{LaSt77}, which says that a vector bundle fixed by an iterate of Frobenius is trivialized by a finite étale cover. Thus Frobenius-periodic vector bundles are naturally related to continuous representations of the étale fundamental group $\pi_1^{\acute{e}t}(X,x)$. Under this correspondence, if $E_\rho$ corresponds to a representation $\rho:\pi_1^{\acute{e}t}(X,x)\to \GL_r(k)$, then $F_X^*E_\rho$ corresponds to the representation $\rho^{(p)}$ obtained by applying $a\mapsto a^p$ to the matrix entries. Consequently, the condition $(F_X^n)^*E_\rho\simeq E_\rho$ becomes the condition $\rho^{(p^n)}\simeq \rho$.

A different viewpoint was developed by Ducrohet and Mehta.  Using Hrushovski's theorem on moduli space of semistable vector bundles, they proved that Frobenius-periodic points are Zariski dense in the moduli space of semistable vector bundles over a smooth projective curve of genus at least two \cite{DuMe10}.  Their result shows that Frobenius-periodic bundles are abundant from the qualitative point of view of moduli theory.  In contrast, the present paper turns from qualitative density to explicit enumeration: in the case of elliptic curves, we count Frobenius-periodic bundles rank by rank and period by period.

One basic observation used in the paper is that the condition $\rho^{(p^n)}\simeq \rho$ is equivalent to saying that $\rho$ is isomorphic over $k$ to a representation defined over the finite field $\mathbb F_{p^n}\subsetneq k$. This follows from the Lang--Steinberg theorem applied to $\GL_r(k)$. Therefore, the enumeration of Frobenius-periodic bundles becomes a finite representation-theoretic counting problem.

Our main result gives an explicit enumeration of Frobenius-periodic vector bundles on elliptic curves. 

\begin{Theorem}[Proposition~\ref{ordinary}, Proposition~\ref{supersingular}]
    Let $k$ be an algebraically closed field of characteristic $p>0$, $X$ an elliptic curve over $k$, $n\in\mathbb{N}_{>0}$, $N_r^{= n}$ denote the number of isomorphism classes of rank $r$ vector bundles $E$ on $X$ whose Frobenius period is exactly $n$, $\mu$ denote the M\"obius function. Then we have 
    $$N_r^{= n}=\begin{cases}
    \sum_{t\mid n}\mu\!\left(\frac{n}{t}\right)\sum\limits_{\substack{z_1,\dots,z_r\geq 0\\z_1+2z_2+\cdots+rz_r=r}}\prod_{s=1}^r\binom{z_s-1+\sum_{d\mid s}\frac{1}{d}\sum_{e\mid d}\mu\!\left(\frac{d}{e}\right)(p^{te}-1)^2}{z_s} ,&\text{ if } X \text{ is ordinary}\\
    \sum_{t\mid n}\mu\!\left(\frac{n}{t}\right)\sum\limits_{\substack{z_1,\dots,z_r\geq 0\\z_1+2z_2+\cdots+rz_r=r}}\prod_{d=1}^r\binom{z_d-1+\frac{1}{d}\sum_{e\mid d}\mu\!\left(\frac{d}{e}\right)(p^{te}-1)^2}{z_d} ,&\text{ if } X \text{ is supersingular}
    \end{cases}$$
\end{Theorem}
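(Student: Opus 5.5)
The plan is to compute first the number $N_r^{\mid n}$ of rank $r$ bundles with $(F_X^n)^*E\simeq E$, and then obtain $N_r^{=n}$ by M\"obius inversion. Since $E$ satisfies $(F_X^n)^*E\simeq E$ exactly when its minimal period divides $n$, we have $N_r^{\mid n}=\sum_{t\mid n}N_r^{=t}$, hence $N_r^{=n}=\sum_{t\mid n}\mu(n/t)N_r^{\mid t}$. So it suffices to show that $N_r^{\mid t}$ is the inner double sum with $q=p^t$.

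By Lange--Stuhler and the Lang--Steinberg observation in the introduction, $N_r^{\mid t}$ equals the number of $k$-isomorphism classes of continuous representations $\pi_1^{\acute{e}t}(X,x)\to\GL_r(k)$ that are defined over $\mathbb F_q$. Such representations have finite image. Two $\mathbb F_q$-forms that become isomorphic over $k$ are already isomorphic over $\mathbb F_q$ (Noether--Deuring, or Hilbert 90 for $\GL$). Hence $N_r^{\mid t}$ is the number of isomorphism classes of $r$-dimensional continuous $\mathbb F_q$-representations of $\pi:=\pi_1^{\acute{e}t}(X,x)$.

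I will use the structure of this abelian profinite group: $\pi\simeq \prod_{\ell\ne p}\mathbb Z_\ell^2\times\mathbb Z_p$ if $X$ is ordinary, and $\pi\simeq\prod_{\ell\ne p}\mathbb Z_\ell^2$ if $X$ is supersingular. Write $\pi'$ for the prime-to-$p$ part.

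\textbf{Counting irreducibles of $\pi'$.} The irreducible $\mathbb F_q$-representations of $\pi'$ correspond to orbits of the $q$-power map on the set of continuous characters $\pi'\to k^*$. These characters are the pairs $(a,b)\in(k^*)^2$ of finite order. The pairs fixed by the $d$-th iterate of the $q$-power map are exactly $(\mathbb F_{q^d}^*)^2$, of size $(q^d-1)^2$. M\"obius inversion then shows that the number of irreducibles of dimension $d$ is
\[M_d=\frac1d\sum_{e\mid d}\mu(d/e)(q^e-1)^2 .\]

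\textbf{Supersingular case.} Every finite quotient of $\pi'$ has order prime to $p$, so by Maschke every representation is semisimple. It is therefore a multiset of irreducibles of total dimension $r$, and the count is the coefficient of $x^r$ in $\prod_d(1-x^d)^{-M_d}$. Expanding, and letting $z_d$ be the number of chosen irreducibles of dimension $d$, gives $\sum_{\sum dz_d=r}\prod_d\binom{z_d-1+M_d}{z_d}$, which is the second formula.

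\textbf{Ordinary case.} Let $V$ be a representation and let $\gamma$ be a topological generator of $\mathbb Z_p$. Since $\pi'$ acts through a finite group of order prime to $p$, $V$ splits into $\pi'$-isotypic components, and $\gamma$ preserves each of them because $\gamma$ commutes with $\pi'$. Suppose $W$ is irreducible of dimension $d$, so that $\mathrm{End}(W)=\mathbb F_{q^d}$. Then the $W$-isotypic component is $W\otimes_{\mathbb F_{q^d}}U$, where $U$ is an $\mathbb F_{q^d}$-vector space on which $\gamma$ acts by an automorphism of $p$-power order, i.e.\ a unipotent one. Conversely, every unipotent $u$ has $p$-power order and so defines a continuous action of $\mathbb Z_p$. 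Isomorphism classes of such $(U,u)$ are given by Jordan types, i.e.\ partitions.

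Consequently a representation is the same as a multiset of pairs $(W,j)$, with $W$ irreducible and $j$ a Jordan block size, where the pair $(W,j)$ contributes dimension $j\dim W$. The number of such pairs of total dimension $s$ is $\sum_{d\mid s}M_d$. The generating function is therefore $\prod_s(1-x^s)^{-\sum_{d\mid s}M_d}$, and expanding it gives the first formula.

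The main obstacle is making this ordinary-case equivalence rigorous over the non-closed field $\mathbb F_q$. Specifically, one must show that the category of $r$-dimensional representations of $\pi'\times\mathbb Z_p$ decomposes as a product over $W$ of categories of unipotent $\mathbb F_{q^d}$-modules. I would prove this via the isotypic decomposition together with $\mathrm{Hom}_{\pi'}(W,-)$, which is an equivalence from $W$-isotypic $\pi'$-modules onto $\mathbb F_{q^d}$-vector spaces. A further check needed there is that every continuous representation factors through a finite quotient in which the $\mathbb Z_p$-part acts unipotently.
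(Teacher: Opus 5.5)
Your proof is correct and reaches the same generating function $\prod_{d}\prod_{j}(1-x^{jd})^{-M_d}$ as the paper, but by a genuinely different route.

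\textbf{The paper's route.} In its counting, the paper works directly with the condition $\rho^{(p^n)}\simeq\rho$ over $k$. There the indecomposable continuous representations of $\pi_1^{\acute{e}t}(X,x)$ are explicit: $\chi\cdot J_m(1)$, a character of the prime-to-$p$ part times a unipotent Jordan block for $\mathbb Z_p$. The twist $\rho\mapsto\rho^{(p^n)}$ sends $\chi\cdot J_m(1)$ to $\chi^{(p^n)}\cdot J_m(1)$. The paper then argues, implicitly via Krull--Schmidt, that $\rho^{(p^n)}\simeq\rho$ forces the multiset of indecomposable summands to be a union of $\sigma_n$-orbits, with multiplicity constant along each orbit. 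Counting orbits of length $d$ gives its $a_n(d)$, which is your $M_d$.

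\textbf{Your route.} You use Lang--Steinberg together with Noether--Deuring to turn the fixed-point condition into a classification of $\mathbb F_q$-representations up to $\mathbb F_q$-isomorphism. You then do the representation theory over the finite field. Irreducibles of $\pi'$ of dimension $d$ are the Frobenius orbits of length $d$, with endomorphism field $\mathbb F_{q^d}$. In the ordinary case the $W$-isotypic part becomes an $\mathbb F_{q^d}$-space with a unipotent operator, classified by partitions. The dictionary between the two arguments: your summand $W\otimes_{\mathbb F_{q^d}}J_j(1)$, base-changed to $k$, is exactly the paper's orbit block $W_{(\Omega,[\rho_j])}$.

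\textbf{Comparison.} Your approach makes the Lang--Steinberg step do real work. It replaces the paper's somewhat informal orbit-grouping argument with a clean product decomposition of a module category. The cost is that you need descent and Schur/isotypic theory over a non-closed field, including Jordan forms over $\mathbb F_{q^d}$. The paper avoids all of that, because over $k$ the indecomposables are completely explicit and Frobenius merely permutes their character labels.

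\textbf{A shared unstated step.} Both you and the paper assume that $(F_X^n)^*E\simeq E$ holds \emph{only} when the minimal period divides $n$. This is true, for the following reason. All such $E$ are étale trivializable, and $\rho\mapsto\rho^{(p)}$ is induced by the field automorphism $a\mapsto a^p$ of $k$, so it is injective on isomorphism classes. Hence the set of periods is closed under differences, and $N_r^{\mid n}=\sum_{t\mid n}N_r^{=t}$ is justified.
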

This viewpoint is closely connected with the classical study of torsion points on abelian varieties. Recall that if \(A\) is an abelian variety of dimension \(g\) over an algebraically closed field of characteristic \(p>0\), then for every integer \(m\) prime to \(p\), one has $A[m]\cong (\mathbb Z/m\mathbb Z)^{2g}$ \cite{MilneAV}. For the elliptic curve \(X\), \(\operatorname{Pic}^0(X)\) is the Jacobian variety of \(X\), and the rank-one case of our problem recovers exactly a torsion-point counting problem on \(\operatorname{Pic}^0(X)\): $(F_X^n)^*L\simeq L\Leftrightarrow L^{\otimes(p^n-1)}\simeq \mathcal O_X$, so Frobenius-periodic line bundles are precisely \((p^n-1)\)-torsion points. Thus our formula gives \(N_1^{\mid n}=\sum_{t\mid n}N_1^{=t}=(p^n-1)^2\) according to Corollary~\ref{Cor}, which coincides with the order of $\operatorname{Pic}^0(X)[p^n-1]$. 

\section{Preliminaries}

We introduce some basic definitions and propositions which will be applied to the construction of generating function.

\begin{Definition}
Let $k$ be an algebraically closed field of characteristic $p>0$, $X/k$ an elliptic curve. We say that $X$ is \emph{ordinary} if $X[p](k)\cong \mathbb Z/p\mathbb Z$. We say that $X$ is \emph{supersingular} if $X$ is not ordinary.
\end{Definition}

\begin{Lemma}[{\cite[Proposition~5.13, Proposition~5.14]{Kun17}}]
    Let $k$ be an algebraically closed field of characteristic $p>0$, $X/k$ an elliptic curve, $x\in X(k)$. If $X$ is ordinary, then $\pi^{\acute{e}t}_1(X,x)\cong \left(\prod_{\ell\neq p}\mathbb Z_\ell^2\right)\times \mathbb Z_p$; if $X$ is supersingular, then $\pi^{\acute{e}t}_1(X,x)\cong \prod_{\ell\neq p}\mathbb Z_\ell^2$. Here $\mathbb{Z}_q:=\varprojlim \mathbb{Z}/q^n\mathbb{Z}$ for any prime number $q$.
\end{Lemma}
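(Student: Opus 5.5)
The plan is to compute $\pi^{\acute{e}t}_1(X,x)$ as an inverse limit of Galois groups of connected finite étale covers, and to show that these covers are exactly the separable isogenies onto $X$. Take $x$ as the origin of $X$.

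\textbf{Step 1: covers are isogenies.} Let $f:Y\to X$ be a connected finite étale cover of degree $N$. Riemann--Hurwitz gives $2g_Y-2=N(2\cdot 1-2)=0$, so $g_Y=1$. Choose $y\in f^{-1}(x)$ as the origin of $Y$. By rigidity, a morphism of elliptic curves sending origin to origin is a homomorphism, so $f$ is a separable isogeny. Then $\ker f$ is a finite étale (constant) subgroup of order $N$, acting on $Y$ by translations. Hence every connected finite étale cover is Galois, with abelian group $\ker f$. It follows that $\pi_1^{\acute{e}t}(X,x)$ is abelian and equals $\varprojlim \ker f$ over pointed separable isogenies $f$ onto $X$.

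\textbf{Step 2: a cofinal system.} For $N=p^a m$ with $p\nmid m$, write $[N]_X=[m]\circ V^a\circ F^a$, where $F$ is relative Frobenius and $V$ is Verschiebung. The étale part of $[N]_X$ is $\phi_N:=[m]\circ V^a$. This is a separable isogeny onto $X$ with kernel $X[N](k)$.

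I claim every separable $f:Y\to X$ of degree $N$ is dominated by $\phi_N$. The dual isogeny satisfies $f\circ\hat f=[N]_X$. The separable-inseparable factorization of $\hat f$ then shows that $f\circ(\text{separable part of }\hat f)$ coincides with $\phi_N$ up to the purely inseparable factor $F^a$. Equivalently, $\ker f\subset Y[N](k)$ is a quotient of $X[N](k)$ through the Weil-type duality between $\ker f$ and $\ker\hat f$. So $\phi_N$ factors through $f$.

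Consequently
$$\pi_1^{\acute{e}t}(X,x)\cong\varprojlim_N X[N](k)\cong \prod_{\ell\neq p}T_\ell(X)\times \varprojlim_a X[p^a](k),$$
where the transition maps are multiplication maps.

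\textbf{Step 3: the individual factors.} For $\ell\neq p$, $X[\ell^n](k)\cong(\mathbb Z/\ell^n\mathbb Z)^2$, since $[\ell^n]$ is separable of degree $\ell^{2n}$; therefore $T_\ell(X)\cong\mathbb Z_\ell^2$. For the $p$-part, $X[p^a](k)$ is the kernel of $V^a$ as a group of $k$-points.

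\emph{Ordinary case.} Here $X[p](k)\cong\mathbb Z/p\mathbb Z$, so $V$ is étale of degree $p$. Hence $X[p^a](k)$ is cyclic of order $p^a$, because a finite abelian $p$-group with cyclic $p$-torsion is cyclic. The limit is then $\mathbb Z_p$.

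\emph{Supersingular case.} Here $X[p](k)=0$, so $V$ is purely inseparable, every $X[p^a](k)$ is trivial, and the $p$-factor vanishes.

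\textbf{Main obstacle.} I expect the hardest point to be the cofinality claim in Step 2: that an arbitrary separable isogeny onto $X$ is dominated by the étale part of some $[N]_X$. This is where the factorization $[p]=V\circ F$ and the duality of kernels must be handled carefully, especially in the $p$-primary part. I would first try to prove it by induction on $\deg f$, factoring $f$ through isogenies of prime degree.
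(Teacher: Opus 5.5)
Your proposal is correct in substance and follows the standard argument. Riemann--Hurwitz and rigidity make every connected finite étale cover a separable isogeny. These covers are cofinal with the separable parts of the $[N]_X$, so $\pi^{\acute{e}t}_1(X,x)\cong\prod_{\ell}T_\ell X$, with the $p$-factor built from $k$-points, and then each factor is computed. The paper gives no argument of its own and only cites Kunz's Propositions 5.13--5.14, so your write-up is a self-contained replacement for that citation. The cost is that you must do the Frobenius/Verschiebung bookkeeping yourself, and there is one slip there.

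\textbf{The slip.} In the supersingular case $V$ is purely inseparable, so $\phi_N:=[m]\circ V^a$ is not separable. Your sentence ``this is a separable isogeny onto $X$ with kernel $X[N](k)$'' is therefore false as written. Define $\phi_N$ instead by $[N]_X=\phi_N\circ F^{c}$ with $\phi_N$ separable, where $p^c$ is the inseparable degree of $[N]_X$ ($c=a$ if $X$ is ordinary, $c=2a$ if supersingular). Then $F^c$, being a group isomorphism on $k$-points, carries $X[N](k)$ onto $\ker\phi_N(k)$.

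\textbf{Cofinality.} With this definition your cofinality argument is already complete, with $F^a$ replaced by $F^c$. Write $\hat f=\hat f_{\mathrm{sep}}\circ F^{b}$. The identity $[N]_X=f\circ\hat f=(f\circ\hat f_{\mathrm{sep}})\circ F^{b}$, with $f\circ\hat f_{\mathrm{sep}}$ separable, forces $b=c$ by comparing inseparable degrees. Surjectivity of $F^c$ then gives $\phi_N=f\circ\hat f_{\mathrm{sep}}$, and $\hat f_{\mathrm{sep}}$ preserves origins. So the step you flagged as the main obstacle needs neither the Weil-type duality remark nor an induction on $\deg f$.

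\textbf{Supersingular means $X[p](k)=0$.} The paper defines supersingular as ``not ordinary'', so you should say why this forces $X[p](k)=0$. We have $|X[p](k)|=\deg_s[p]$, which divides $p^2$. It is not $p^2$, because $[p]$ factors through $F$. So it is $1$ or $p$, and order $p$ is exactly the ordinary case.
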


\begin{Lemma}[{\cite[Proposition~1.0.2]{LaSt77}}]\label{LaStequivalence}
    Let $k$ be an algebraically closed field, $X$ a scheme proper over $k$, $x\in X(k)$. There exists a natural bijection between isomorphism classes of continuous representations of $\pi_1^{\acute{e}t}(X,x)\rightarrow \GL_r(k)$ and isomorphism classes of \'etale trivializable vector bundles on $X$ of rank $r$.
\end{Lemma}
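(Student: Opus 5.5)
The plan is to pass through finite quotients of $\pi_1^{\acute{e}t}(X,x)$ and use Galois descent along finite étale covers. Throughout I assume, as is implicit in the presence of a base point and in the application to elliptic curves, that $X$ is connected and reduced (in fact a proper variety over $k$). The key input is then $H^0(Y,\mathcal O_Y)=k$ for every connected finite étale cover $Y\to X$: such a $Y$ is again proper, connected and reduced over the algebraically closed field $k$.

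\emph{From representations to bundles.} Give $\GL_r(k)$ the discrete topology. A continuous $\rho:\pi_1^{\acute{e}t}(X,x)\to\GL_r(k)$ has open kernel, so it factors through a finite quotient $G$. By Galois theory of the étale fundamental group, $G$ corresponds to a connected Galois finite étale cover $f:Y\to X$ with group $G$, together with a point $y$ over $x$. On the trivial bundle $\mathcal O_Y\otimes_k k^r$, let $G$ act diagonally: by its action on $Y$ and through $\rho$ on $k^r$. Since $f$ is a $G$-torsor, finite étale descent produces a vector bundle $E_\rho$ on $X$ with $f^*E_\rho\cong\mathcal O_Y^{\oplus r}$. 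In particular $E_\rho$ is étale trivializable. If we enlarge $G$ to a bigger finite quotient, the construction changes by a further pullback-and-descent along the intermediate cover, so $E_\rho$ is independent of choices up to canonical isomorphism. Isomorphic representations clearly give isomorphic bundles.

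\emph{From bundles to representations.} Let $E$ be trivialized by a finite étale $g:Y'\to X$. Replacing $Y'$ by a connected component of its Galois closure, we may assume $g$ is connected and Galois with group $G$. Set $V:=H^0(Y',g^*E)$. Since $g^*E\cong\mathcal O_{Y'}^{\oplus r}$ and $H^0(\mathcal O_{Y'})=k$, the space $V$ is $r$-dimensional, and the natural map $\mathcal O_{Y'}\otimes_k V\to g^*E$ is an isomorphism. The descent datum of $g^*E$ gives a $G$-action on $V$. Composing with $\pi_1^{\acute{e}t}(X,x)\twoheadrightarrow G$ yields a continuous representation $\rho_E$. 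The isomorphism $\mathcal O_{Y'}\otimes V\cong g^*E$ is $G$-equivariant, so descending it gives $E_{\rho_E}\cong E$. Conversely $H^0(Y,\mathcal O_Y\otimes k^r)=k^r$ with the action $\rho$, so $\rho_{E_\rho}\cong\rho$. As before, independence of the chosen cover follows by passing to a common Galois refinement and using that global sections do not change under pullback along connected covers.

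\emph{Bijectivity on isomorphism classes.} By descent, $\Hom_X(E_\rho,E_{\rho'})=\Hom_Y(\mathcal O_Y^r,\mathcal O_Y^r)^G$. Since $\Hom_Y(\mathcal O_Y^r,\mathcal O_Y^r)=M_r(H^0(\mathcal O_Y))=M_r(k)$, this invariant space equals $\Hom_G(\rho,\rho')$. So the two constructions are mutually inverse on isomorphism classes, and in fact give an equivalence of categories.

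The main obstacle I expect is not the descent formalism but the identity $H^0(Y,\mathcal O_Y)=k$. It is exactly what makes $V$ have dimension $r$ and makes $\mathcal O\otimes V\to g^*E$ an isomorphism. Here properness, connectedness and reducedness are genuinely used. The other delicate point is checking that the construction is independent of the chosen finite quotient or cover. I would handle this uniformly by working with the filtered system of connected Galois covers, that is, with the fibre functor at $x$.
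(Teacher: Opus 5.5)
Your proof is correct. The paper gives no argument of its own here, only the citation to Lange--Stuhler, and your Galois-descent construction, with $H^0(Y,\mathcal O_Y)=k$ as the key input, is the standard argument underlying that result.

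Your extra assumption that $X$ be connected and reduced is genuinely needed, not cosmetic; it is also what the paper imposes in Proposition~\ref{Frobenius-periodic-etale-representation}. For instance, take $X=E\times\operatorname{Spec}k[\epsilon]/(\epsilon^2)$ with $E$ ordinary. The homomorphisms $\mathbb Z/p\to k[\epsilon]^*$, $1\mapsto 1+a\epsilon$, descend to nontrivial \'etale-trivializable line bundles that are not of the form $E_\rho$.
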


\begin{Lemma}[{\cite[Section~1.1]{LaSt77}}]\label{Frobenius}
    Let $k$ be an algebraically closed field of characteristic $p>0$, $X$ a scheme proper over $k$, $F_X:X\rightarrow X$ the absolute Frobenius morphism, $x\in X(k)$. If $E_{\rho}$ is an \'etale trivializable vector bundle on $X$ corresponding to a representation $\rho:\pi_1^{\acute{e}t}(X,x)\rightarrow \GL_r(k)$, then $F_X^*E_{\rho}$ corresponds to the $p$-th power representation $\rho^{(p)}:\pi_1^{\acute{e}t}(X,x)\rightarrow \GL_r(k),g\mapsto (a_{ij}^{p})$ where $g\in \pi_1^{\acute{e}t}(X,x)$ and $\rho(g)=(a_{ij})$.
\end{Lemma}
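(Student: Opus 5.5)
The plan is to unwind the Lange--Stuhler correspondence of Lemma~\ref{LaStequivalence} explicitly and then track what the absolute Frobenius does to the descent data. Since $\GL_r(k)$ carries the discrete topology, a continuous $\rho$ factors through a finite quotient $G=\pi_1^{\acute{e}t}(X,x)/\ker\rho$. Such a quotient corresponds to a connected Galois étale cover $\pi:Y\to X$ with group $G$, together with a point $y\in Y(k)$ over $x$. I will use the standard description
$$E_\rho=\bigl(\pi_*\mathcal O_Y\otimes_k k^r\bigr)^G,$$
where $g\in G$ acts on $\pi_*\mathcal O_Y$ through the deck transformation and on $k^r$ through $\rho(g)$. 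Equivalently, $E_\rho$ is obtained by descending the trivial bundle $\mathcal O_Y^r$ along $\pi$ using the $G$-linearization whose transition matrices are the constant matrices $\rho(g)$. I will take this as the definition of the bijection, which is how Lange--Stuhler construct it.

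The first key step is to show that the square
$$\begin{CD} Y @>F_Y>> Y\\ @V\pi VV @VV\pi V\\ X @>F_X>> X\end{CD}$$
is cartesian. This is the usual fact that the relative Frobenius $F_{Y/X}:Y\to X\times_{F_X,X}Y$ of an étale morphism is an isomorphism. Since absolute Frobenius commutes with every morphism of $\mathbb F_p$-schemes, $F_Y$ is $G$-equivariant. Because $F_X(x)=x$ and $F_Y(y)=y$, the cover and its base point are preserved. Consequently, pulling back along $F_X$ commutes with $G$-invariant descent:
$$F_X^*E_\rho\cong\bigl(F_Y^*\mathcal O_Y^r\bigr)^G,$$
where the new $G$-linearization is the pullback of the old one.

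The second step is to compute that pulled-back linearization. We have $F_Y^*\mathcal O_Y^r=\mathcal O_Y^r$ via $f\otimes s\mapsto f\cdot F_Y^\#(s)$, and $F_Y^\#$ is $f\mapsto f^p$. So a linearization given by constant matrices $(a_{ij})$ is carried to the one given by $(a_{ij}^p)$. Here it matters that $F_Y^\#$ is not $k$-linear: it raises the constants to the $p$-th power. Hence the descended bundle is $E_{\rho^{(p)}}$. Since $\rho^{(p)}$ factors through the same finite group $G$ and is again a homomorphism, because entrywise $p$-th power is a ring endomorphism of $k$, it is a legitimate continuous representation. Well-definedness on isomorphism classes is immediate, because conjugating $\rho$ by $A$ conjugates $\rho^{(p)}$ by $A^{(p)}$.

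I expect the main obstacle to be bookkeeping rather than a conceptual difficulty. One must make sure that the identification $F_Y^*\mathcal O_Y\cong\mathcal O_Y$ is compatible with the $G$-action, and that the semilinearity of $F_Y^\#$ on constants is handled correctly. I would verify this first on a trivializing affine open, where $E_\rho$ is given by explicit cocycles of constant matrices, and then observe that everything glues.
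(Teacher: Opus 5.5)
Your argument is correct. The paper gives no proof of this lemma; it simply quotes Lange--Stuhler (Section~1.1). What you wrote is the standard argument behind that reference:
\begin{itemize}
\item realize $E_\rho$ by descending $\mathcal O_Y^r$, with the constant linearization $\rho$, along a finite Galois \'etale cover $\pi:Y\to X$;
\item note that $F_Y$ is $G$-equivariant and lies over $F_X$;
\item observe that $F_Y^\#$ raises the constant matrices $\rho(g)$ entrywise to the $p$-th power, which gives $F_X^*E_\rho\cong E_{\rho^{(p)}}$.
\end{itemize}
Your treatment of the semilinearity of $F_Y^\#$ on constants is exactly the point that matters, and your remark on well-definedness under conjugation is right.

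One small simplification is available. Phrase the construction through the equivalence between vector bundles on $X$ and $G$-equivariant vector bundles on $Y$. Then you only need $F_X\circ\pi=\pi\circ F_Y$ and the $G$-equivariance of $F_Y$, which give $\pi^*F_X^*E_\rho\cong F_Y^*\pi^*E_\rho$ as equivariant bundles. The fact that the Frobenius square is cartesian is needed only if you insist on the invariants formula $\bigl(\pi_*\mathcal O_Y\otimes_k k^r\bigr)^G$, in order to commute $F_X^*$ with $\pi_*$. Likewise, the base-point remark is harmless but unnecessary: you reuse the same cover $Y$ with the same $G$-action, so the identification of $G$ with a quotient of $\pi_1^{\acute{e}t}(X,x)$ is unchanged.
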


\begin{Lemma}[{\cite[Proposition~2.3.5]{Wil24}}]\label{determinedbygenerator}
    Let $G$ be a topological group and $S$ a topological generating set for $G$. Let $f_1, f_2:G\rightarrow H$ be continuous homomorphisms to a Hausdorff topological group $H$. If $f_1$ and $f_2$ agree on S then $f_1=f_2$.
\end{Lemma}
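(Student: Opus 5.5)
The plan is to show that the set on which $f_1$ and $f_2$ agree is a closed subgroup of $G$ containing $S$; since $S$ topologically generates $G$, that subgroup must then be all of $G$. Set
$$E:=\{g\in G \mid f_1(g)=f_2(g)\}.$$

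First, I will check that $E$ is a subgroup; this uses only that $f_1,f_2$ are homomorphisms. The identity lies in $E$ because $f_1(1)=1=f_2(1)$. If $g,h\in E$, then
$$f_1(gh^{-1})=f_1(g)f_1(h)^{-1}=f_2(g)f_2(h)^{-1}=f_2(gh^{-1}),$$
so $gh^{-1}\in E$. By hypothesis $S\subseteq E$, and therefore $\langle S\rangle\subseteq E$.

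Second, I will show that $E$ is closed in $G$; this is the only place where the Hausdorff hypothesis on $H$ enters. Consider the map $\phi=(f_1,f_2):G\to H\times H$. It is continuous because each component is continuous. Then $E=\phi^{-1}(\Delta_H)$, where $\Delta_H\subseteq H\times H$ is the diagonal. A topological space is Hausdorff exactly when its diagonal is closed in the product, so $\Delta_H$ is closed and hence $E$ is closed.

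Finally, recall that $S$ being a topological generating set means that the closure $\overline{\langle S\rangle}$ equals $G$. Since $E$ is closed and contains $\langle S\rangle$, we get $G=\overline{\langle S\rangle}\subseteq E$. Thus $f_1=f_2$.

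There is no serious obstacle here. The only step needing care is the closedness of $E$, and the standard characterization of Hausdorffness via the closed diagonal handles it. Alternatively, one can argue with nets: if $g_\alpha\to g$ with $g_\alpha\in E$, then the net $f_1(g_\alpha)=f_2(g_\alpha)$ converges to both $f_1(g)$ and $f_2(g)$, and limits in a Hausdorff space are unique.
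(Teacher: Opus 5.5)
Your proof is correct: the equalizer $E$ is a subgroup containing $S$, and it is closed as the preimage of the closed diagonal under the continuous map $(f_1,f_2)$, so it contains $\overline{\langle S\rangle}=G$. The paper gives no proof of its own and only cites Wilkes. Your equalizer argument is the standard proof of this fact.
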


\begin{Lemma}[{\cite[Example~5.3]{CLV20}}]
    $\hat{\mathbb{Z}}:=\varprojlim\limits_{n\in \mathbb{N}}\mathbb{Z}/n\mathbb{Z}\cong\prod_{\ell \text{ prime}}\mathbb Z_\ell$ is a topologically finitely generated group with generator $\mathbf{1}$, where $\mathbb{Z}_{\ell}:=\varprojlim\limits_{m\in \mathbb{N}}\mathbb{Z}/l^m\mathbb{Z}$ is the $\ell$-adic integer ring.
\end{Lemma}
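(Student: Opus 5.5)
The plan is to prove the two assertions separately: first the isomorphism $\hat{\mathbb Z}\cong\prod_\ell\mathbb Z_\ell$, then that $\mathbf 1$ is a topological generator, by showing that the subgroup it generates is dense.

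\emph{The isomorphism.} I will use the Chinese remainder theorem. For $n=\prod_\ell \ell^{v_\ell(n)}$ it gives a ring isomorphism
$$\mathbb Z/n\mathbb Z\cong\prod_{\ell\mid n}\mathbb Z/\ell^{v_\ell(n)}\mathbb Z .$$
For $n\mid m$ these isomorphisms are compatible with the reduction maps $\mathbb Z/m\to\mathbb Z/n$ on the left and with the componentwise reductions on the right. Passing to the inverse limit over the divisibility-directed set $\mathbb N$, the left side becomes $\hat{\mathbb Z}$. The right side becomes $\varprojlim_n\prod_{\ell\mid n}\mathbb Z/\ell^{v_\ell(n)}$. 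I then identify this limit with $\prod_\ell\varprojlim_m\mathbb Z/\ell^m=\prod_\ell\mathbb Z_\ell$. The point is that the index set $\mathbb N$ (under divisibility) is order-isomorphic to the set of finitely supported exponent vectors $(v_\ell)_\ell$. Limits over such a "product" index system are computed factorwise, since a limit of a product system is the product of the limits. The resulting map is a continuous bijection of compact Hausdorff groups, hence a topological isomorphism.

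\emph{Density of $\langle\mathbf 1\rangle$.} The element $\mathbf 1$ denotes the image of $1\in\mathbb Z$ under the canonical map $\iota:\mathbb Z\to\hat{\mathbb Z}$, $a\mapsto(a\bmod n)_n$. The subgroup it generates is $\iota(\mathbb Z)$, so it suffices to show $\iota(\mathbb Z)$ is dense. In the inverse limit topology, a neighbourhood basis of a point $x=(x_n)_n$ is given by the sets $U_{N}(x)=\{y:\ y_N=x_N\}$ for $N\in\mathbb N$. To see this, note that a basic open set constrains only finitely many coordinates $n_1,\dots,n_s$. Constraining the single coordinate $N=\operatorname{lcm}(n_i)$ determines those coordinates through the transition maps. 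Choosing an integer $a$ with $a\equiv x_N\pmod N$ gives $\iota(a)\in U_N(x)$, so every nonempty open set meets $\iota(\mathbb Z)$.

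Hence the closure of $\langle\mathbf 1\rangle$ is all of $\hat{\mathbb Z}$, and $\hat{\mathbb Z}$ is topologically generated by the single element $\mathbf 1$. In particular it is topologically finitely generated. Transporting along the isomorphism, $\prod_\ell\mathbb Z_\ell$ is topologically generated by $(1,1,\dots)$.

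The only step needing real care is the identification of the inverse limit with the product, that is, the cofinality and commutation-of-limits argument. Everything else is formal. If that step gets delicate, I would instead write down the explicit map $x\mapsto\bigl((x_{\ell^m})_m\bigr)_\ell$ and check bijectivity directly. Injectivity follows from the CRT isomorphism at each level $n$. For surjectivity, given a compatible family, the CRT decomposition of $\mathbb Z/n$ determines each coordinate $x_n$.
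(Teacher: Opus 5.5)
Your proof is correct. The paper gives no argument for this lemma; it only quotes [CLV20, Example~5.3]. So there is no in-paper proof to compare with, and what you have written is the standard argument that makes the statement self-contained.

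Both halves check out:
\begin{itemize}
\item The levelwise Chinese remainder isomorphisms are compatible with the transition maps.
\item The limit over finitely supported exponent vectors is computed factorwise. Your explicit inverse, $x_n:=\mathrm{CRT}^{-1}((y_{\ell,v_\ell(n)})_{\ell\mid n})$, verifies this directly.
\item A continuous bijection from the compact space $\hat{\mathbb{Z}}$ onto the Hausdorff space $\prod_\ell\mathbb{Z}_\ell$ is a homeomorphism.
\item The lcm observation shows that the sets $U_N(x)$ form a neighbourhood basis, so $\iota(\mathbb{Z})=\langle\mathbf 1\rangle$ is dense.
\end{itemize}
Compared with the bare citation, your version has one concrete advantage: it pins down what $\mathbf 1$ is, namely the image of $1\in\mathbb{Z}$, i.e.\ $(1,1,\dots)\in\prod_\ell\mathbb{Z}_\ell$. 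This is exactly the form in which the lemma is used in Lemma~\ref{Numberofcharacter}, together with Lemma~\ref{determinedbygenerator}, to say that a continuous character of $\hat{\mathbb{Z}}$ is determined by its value on $\mathbf 1$.

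One point of precision: the limit defining $\hat{\mathbb{Z}}$ must run over the \emph{positive} integers ordered by divisibility. With the paper's convention $0\in\mathbb{N}$, the index $0$ would be a greatest element, since every $n$ divides $0$. The limit would then collapse to $\mathbb{Z}/0\mathbb{Z}=\mathbb{Z}$. Your argument tacitly uses $n\geq 1$, since you factor $n=\prod_\ell \ell^{v_\ell(n)}$. Your order-isomorphism with finitely supported exponent vectors also holds only for $\mathbb{N}_{>0}$. It is worth stating this restriction explicitly.
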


\begin{Lemma}
    Let $k$ be a field of characteristic $p>0$. Then we have $\Hom_{\mathrm{cont}}(\hat{\mathbb{Z}},k^*)=\Hom_{\mathrm{cont}}(\prod_{\ell\neq p}\mathbb Z_\ell,k^*)$ where $k^*$ is endowed with the discrete topology.
\end{Lemma}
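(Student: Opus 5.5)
Via the isomorphism $\hat{\mathbb{Z}}\cong\prod_{\ell}\mathbb Z_\ell$, the statement reduces to showing that every continuous homomorphism $\chi:\hat{\mathbb Z}\to k^*$ is trivial on the factor $\mathbb Z_p$ and then factors through the projection $\hat{\mathbb Z}\to\prod_{\ell\neq p}\mathbb Z_\ell$. Conversely, any continuous character of $\prod_{\ell\neq p}\mathbb Z_\ell$ pulls back to one of $\hat{\mathbb Z}$ along this projection, which is a continuous surjection. So the equality is meant as the natural identification given by restriction/pullback along the projection, and the content is the vanishing on $\mathbb Z_p$.

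First, since $k^*$ is discrete, $\ker\chi$ is an open subgroup of the profinite group $\hat{\mathbb Z}$. Hence $\chi$ factors through a finite quotient $\mathbb Z/N\mathbb Z$ for some $N$, and the image of $\chi$ is a finite cyclic subgroup of $k^*$. Write $N=p^aM$ with $p\nmid M$. Then the image of the generator $\mathbf 1$ satisfies $\zeta^{N}=1$, where $\zeta:=\chi(\mathbf 1)$.

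The key observation is that $k^*$ has no nontrivial elements of $p$-power order. Indeed, if $u^{p^a}=1$ then $(u-1)^{p^a}=u^{p^a}-1=0$ by the Frobenius identity in characteristic $p$, so $u=1$ because $k$ is a field. It follows that the restriction of $\chi$ to the $\mathbb Z_p$-factor, whose finite continuous images are $p$-groups (quotients of $\mathbb Z/p^m$), is trivial. Concretely, in $\mathbb Z/N\cong\mathbb Z/p^a\times\mathbb Z/M$ the $\mathbb Z/p^a$ component maps to elements of order dividing $p^a$, hence to $1$. Therefore $\chi$ kills $\mathbb Z_p$ and factors through the projection onto $\prod_{\ell\ne p}\mathbb Z_\ell$. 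Continuity of the factored map holds because the projection is a quotient map of topological groups, and a map factoring a continuous map through a quotient map is continuous.

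The only point needing care is this continuity/finite-quotient step, i.e.\ that a continuous map to a discrete group from a profinite group has open kernel containing some $N\hat{\mathbb Z}$. This is standard: the open subgroups of $\hat{\mathbb Z}$ are exactly the subgroups $N\hat{\mathbb Z}$. With that in hand, the bijection between $\Hom_{\mathrm{cont}}(\hat{\mathbb Z},k^*)$ and $\Hom_{\mathrm{cont}}(\prod_{\ell\ne p}\mathbb Z_\ell,k^*)$ is immediate. If one prefers to use the earlier results, the same argument can be phrased via Lemma~\ref{determinedbygenerator}: $\chi$ is determined by $\zeta=\chi(\mathbf 1)$, which is an arbitrary root of unity of order prime to $p$ in $k^*$, and the same parametrization describes the characters of $\prod_{\ell\ne p}\mathbb Z_\ell$.
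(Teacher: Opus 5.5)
Your proof is correct and follows essentially the same route as the paper. In both, the continuous image of the $\mathbb Z_p$-factor is a finite $p$-group, and the identity $(u-1)^{p^a}=u^{p^a}-1=0$ in characteristic $p$ forces it to be trivial, so $\chi$ factors through $\prod_{\ell\neq p}\mathbb Z_\ell$. You also spell out what the paper leaves implicit: the open kernel, the continuity of the factored map, and reading the equality as an identification via pullback along the projection.
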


\begin{proof}
    Note that $\hat{\mathbb Z}\cong \mathbb{Z}_p\times \prod_{\ell\neq p}\mathbb Z_\ell$. Take $\chi\in \Hom_{\mathrm{cont}}(\hat{\mathbb{Z}},k^*)$ and restrict it to $\mathbb{Z}_p$, we have $\Ima(\chi|_{\mathbb{Z}_p})$ is a finite $p$-group. Since $\chara k=p$, if the multiplicative order of $x\in \Ima(\chi|_{\mathbb{Z}_p})$ is divisible by $p$, we obtain $(x-1)^{p^n}=0$ for some $n\geq 0$, which implies that $x=1$. So $\Ima(\chi|_{\mathbb{Z}_p})=\{1\}$, which implies that $\Hom_{\mathrm{cont}}(\hat{\mathbb{Z}},k^*)=\Hom_{\mathrm{cont}}(\prod_{\ell\neq p}\mathbb Z_\ell,k^*)$.
\end{proof}

\begin{Lemma}\label{Numberofcharacter}
    Let $p$ be a prime integer. Then $|\Hom_{\mathrm{cont}}(\prod_{\ell\neq p}\mathbb Z_\ell,\mathbb{F}_{p^n}^*)|=p^n-1$ where $\mathbb{F}_{p^n}^*$ is endowed with the discrete topology.
\end{Lemma}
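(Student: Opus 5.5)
The plan is to reduce the computation to the finite cyclic quotients of $\hat{\mathbb Z}$ and then count homomorphisms from a cyclic group into the cyclic group $\mathbb{F}_{p^n}^*$. Write $G:=\prod_{\ell\neq p}\mathbb Z_\ell$. Since $\mathbb{F}_{p^n}^*$ is discrete, every continuous $\chi:G\to\mathbb{F}_{p^n}^*$ has open kernel. Hence $\chi$ factors through a finite quotient of $G$. The open subgroups of $G$ are the subgroups $mG$ with $p\nmid m$, and $G/mG\cong\mathbb Z/m\mathbb Z$ for such $m$.

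By the previous lemma, $\Hom_{\mathrm{cont}}(G,\mathbb{F}_{p^n}^*)=\Hom_{\mathrm{cont}}(\hat{\mathbb Z},\mathbb{F}_{p^n}^*)$, with the $\mathbb Z_p$ factor acting trivially. So it suffices to count continuous characters of $\hat{\mathbb Z}$.

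\textbf{Step 1 (injectivity).} By Lemma~\ref{determinedbygenerator}, together with the fact that $\hat{\mathbb Z}$ is topologically generated by $\mathbf 1$, a continuous $\chi$ is determined by $\chi(\mathbf 1)\in\mathbb{F}_{p^n}^*$. This gives an injection
$$\Hom_{\mathrm{cont}}(\hat{\mathbb Z},\mathbb{F}_{p^n}^*)\hookrightarrow\mathbb{F}_{p^n}^*,\qquad \chi\mapsto\chi(\mathbf 1).$$
In particular the set has at most $p^n-1$ elements.

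\textbf{Step 2 (surjectivity).} Given $a\in\mathbb{F}_{p^n}^*$, its order divides $m:=p^n-1$, which is prime to $p$. Define $\chi_a$ as the composite
$$\hat{\mathbb Z}\twoheadrightarrow\mathbb Z/m\mathbb Z\xrightarrow{\ 1\mapsto a\ }\mathbb{F}_{p^n}^*.$$
The second map is well defined because $a^m=1$. The first map is continuous because $\mathbb Z/m\mathbb Z$ is one of the quotients in the inverse limit defining $\hat{\mathbb Z}$. Thus $\chi_a$ is continuous and $\chi_a(\mathbf 1)=a$. Combining the two steps, the injection is a bijection, and the count is $|\mathbb{F}_{p^n}^*|=p^n-1$.

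The only real subtlety is continuity, namely checking that every element of $\mathbb{F}_{p^n}^*$ is actually attained by a continuous character. This is handled by the explicit factorization through $\mathbb Z/(p^n-1)$, together with $\gcd(p,p^n-1)=1$, which guarantees that the character is trivial on $\mathbb Z_p$ as required.
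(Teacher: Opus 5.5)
Your proof is correct and follows the same route as the paper. You reduce to $\hat{\mathbb Z}$ via the preceding lemma, get uniqueness from Lemma~\ref{determinedbygenerator} since $\mathbf 1$ topologically generates, and get existence by factoring through a finite cyclic quotient of order prime to $p$. The only cosmetic differences are that you factor through $\mathbb Z/(p^n-1)\mathbb Z$ uniformly rather than $\mathbb Z/\ord(x)\mathbb Z$, and that you make the bijection $\chi\mapsto\chi(\mathbf 1)$ explicit where the paper leaves it implicit.
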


\begin{proof}
    For any $x\in \mathbb{F}_{p^n}^*$, we have $p\nmid \ord(x)$. Set $m=\ord(x)$ and define $\chi_x:\mathbb{Z}\rightarrow \mathbb{F}_{p^n}^*, i\mapsto x^i$. It follows that $\ker(\chi_x)=m\mathbb{Z}$, and we have $\mathbb{Z}\twoheadrightarrow \mathbb{Z}/m\mathbb{Z}\hookrightarrow \mathbb{F}_{p^n}^*,i\mapsto \bar{i}\mapsto x^i$. We have a natural projection $\hat{\mathbb{Z}}\twoheadrightarrow \mathbb{Z}/m\mathbb{Z}$, composing with $\mathbb{Z}/m\mathbb{Z}\hookrightarrow \mathbb{F}_{p^n}^*$, we obtain 
    $$\chi_x\in \Hom_{\mathrm{cont}}(\hat{\mathbb{Z}},k^*):\hat{\mathbb{Z}}\rightarrow \mathbb{Z}/m\mathbb{Z}\rightarrow \mathbb{F}_{p^n}^*, \mathbf{1}\mapsto \bar{1}\mapsto x.$$
Consequently, every $x\in \mathbb{F}_{p^n}^*$ determines a unique $\chi_x\in \Hom_{\mathrm{cont}}(\hat{\mathbb{Z}},k^*)$ such that $\mathbf{1}\mapsto x$ by Lemma~\ref{determinedbygenerator}. Hence $|\Hom_{\mathrm{cont}}(\prod_{\ell\neq p}\mathbb Z_\ell,\mathbb{F}_{p^n}^*)|=|\Hom_{\mathrm{cont}}(\hat{\mathbb{Z}},k^*)|=p^n-1$. 
\end{proof}

\begin{Lemma}[Lang--Steinberg {\cite[Theorem 10.1]{Ste68}}]\label{Lang}
Let $G$ be a connected linear algebraic group over an algebraically closed field $k$, and $\sigma:G\rightarrow G$ an endomorphism such that the fixed point group $G^\sigma$ is finite. Then the morphism $L_\sigma:G\rightarrow G$, $g\mapsto g^{-1}\sigma(g)$ is surjective.
\end{Lemma}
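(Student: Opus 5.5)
The plan is to prove the surjectivity of $L_\sigma$ through an orbit argument. Let $G$ act on itself (as a variety) by $\sigma$-twisted conjugation, $g\ast x:=g\,x\,\sigma(g)^{-1}$. Since $L_\sigma(g)=g^{-1}\sigma(g)=(g^{-1}\ast 1)^{-1}$ and inversion is a bijection of $G$, the statement is equivalent to the orbit $G\ast 1$ being all of $G$. So the goal is to show there is a single $\ast$-orbit.

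The first step is to compute stabilizers. The stabilizer of a point $a\in G$ is
$$\{g\in G: g a\sigma(g)^{-1}=a\}=\{g: g=a\sigma(g)a^{-1}\}=G^{\sigma_a},$$
the fixed group of the twisted endomorphism $\sigma_a:=\mathrm{Int}(a)\circ\sigma$. In particular the stabilizer of $1$ is $G^\sigma$, which is finite by hypothesis. Hence $\dim(G\ast 1)=\dim G$.

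The second step uses the standard facts that orbits of an algebraic group action are locally closed and that $G$ is irreducible, $G$ being connected. Together these imply that any orbit of dimension $\dim G$ is open and dense. Two open dense subsets of the irreducible $G$ meet, and distinct orbits are disjoint. So once every orbit is known to have full dimension, there is exactly one orbit, and $L_\sigma$ is surjective. The whole proof therefore reduces to the following claim.

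\emph{Claim:} for every $a\in G$, the group $G^{\sigma_a}$ is finite, or at least $\dim G^{\sigma_a}=0$.

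This is the step I expect to be the main obstacle: finiteness of $G^\sigma$ does not obviously transfer to $G^{\mathrm{Int}(a)\sigma}$. My first attempt would follow Steinberg's structural route.

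\begin{itemize}
\item First find a $\sigma_a$-stable Borel subgroup $B$ and a $\sigma_a$-stable maximal torus $T\subset B$. These exist for any surjective endomorphism by a fixed-point argument on the flag variety.
\item Then show that finiteness of the fixed points is equivalent to a condition on the induced action on the character lattice $X(T)$ and on the root subgroups: $\sigma$ has no nonzero fixed vectors on $X(T)\otimes\mathbb Q$, and no root subgroup is fixed pointwise. This is a condition invariant under composing with inner automorphisms, because $\mathrm{Int}(a)$ acts on these combinatorial data through the Weyl group after conjugating $T$.
\item To handle the unipotent radical and reduce to the reductive quotient, I would use a dévissage along $\sigma$-stable normal subgroups. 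This is legitimate because $L_\sigma$ is surjective on $G$ as soon as it is surjective on a $\sigma$-stable closed normal connected subgroup $N$ and on $G/N$, provided fixed points stay finite on both.
\end{itemize}

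A fallback for the Claim is upper semicontinuity of $a\mapsto\dim G^{\sigma_a}$. The locus where the stabilizer dimension is $0$ is open; it is nonempty since it contains $1$, and it equals the open orbit $G\ast 1$. The gap is then to show that the complementary closed, $\ast$-stable set is empty. I would try to do this by exhibiting a semisimple or unipotent element of that set lying in a $\sigma_a$-stable torus or unipotent subgroup where the fixed-point dimension can be compared directly with that of $\sigma$.

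In the special case needed later in the paper, $G=\GL_r(k)$ with $\sigma$ the $p^n$-power Frobenius, the Claim is immediate: $d\sigma_a=\mathrm{Ad}(a)\circ d\sigma=0$, so the differential of every orbit map $g\mapsto g a\sigma(g)^{-1}$ at the identity is $X\mapsto Xa$, which is bijective. Hence every orbit has dimension $\dim G$, and the single-orbit argument above applies directly.
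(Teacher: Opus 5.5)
The paper does not prove this lemma; it cites Steinberg. For the only case the paper uses, $G=\GL_r(k)$ with $\sigma$ the $p^n$-th power map, your last paragraph is a complete proof. It is Lang's original argument, and it avoids the Claim because $d(\mathrm{Int}(a)\circ\sigma)=0$ for every $a$.

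For the lemma as stated, however, there is a genuine gap. The orbit reduction is fine (incidentally, $L_\sigma(g)=g^{-1}\ast 1$, not its inverse; this is harmless). But the Claim you reduce to is not a step toward the lemma; it is equivalent to it. If $a=L_\sigma(g)$, then $\mathrm{Int}(a)\circ\sigma=\mathrm{Int}(g^{-1})\circ\sigma\circ\mathrm{Int}(g)$, so $G^{\mathrm{Int}(a)\circ\sigma}=g^{-1}G^\sigma g$ is finite. So all the content sits in the Claim, and none of your routes proves it:
\begin{itemize}
\item Second bullet: invariance of the condition on $X(T)\otimes\mathbb{Q}$ under inner twisting does not follow from ``$\mathrm{Int}(a)$ acts through the Weyl group''. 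The map $w\circ\sigma^*$ is a different linear map and can acquire the eigenvalue $1$ (already for $\sigma^*=w=-1$ in rank one). That this cannot happen when $G^\sigma$ is finite is a consequence of the lemma, not an input.
\item Third bullet: the d\'evissage stops at the semisimple quotient, which is exactly where the difficulty lies.
\item The fallback is, as you say yourself, open.
\end{itemize}

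The missing idea is that you never need finiteness of $G^{\mathrm{Int}(a)\circ\sigma}$, only a Borel subgroup stable under $\mathrm{Int}(a)\circ\sigma$. Such a subgroup exists with no finiteness hypothesis: every surjective endomorphism of a connected linear algebraic group stabilizes some Borel subgroup. This is a separate theorem in the same memoir of Steinberg, proved before and independently of Lang's theorem. It is not a formal fixed-point argument, and the usual textbook proof for Frobenius maps uses Lang's theorem, so beware of circularity. The general case reduces to surjective $\sigma$ by induction on $\dim G$, dividing out $(\ker\sigma)^\circ$, on which $L_\sigma$ is inversion.

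The argument then runs as follows. Take a $\sigma$-stable Borel subgroup $B_0$ and an $\mathrm{Int}(a)\circ\sigma$-stable one $B=hB_0h^{-1}$. Then $a\sigma(h)B_0\sigma(h)^{-1}a^{-1}=hB_0h^{-1}$, so $b:=h^{-1}a\sigma(h)\in N_G(B_0)=B_0$. On the solvable group $B_0$, where $B_0^\sigma\subseteq G^\sigma$ is finite, your d\'evissage does work. Run it along the derived series, which is stable under every endomorphism. Surjectivity of $L_\sigma$ on a term $N$ makes $(B_0/N)^\sigma$ the image of $B_0^\sigma$, hence finite. On a commutative connected quotient, $L_\sigma$ is a homomorphism with finite kernel, hence surjective by dimension. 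Thus $b=y^{-1}\sigma(y)$ with $y\in B_0$, and $a=hb\sigma(h)^{-1}=(yh^{-1})^{-1}\sigma(yh^{-1})$.
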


\begin{Lemma}[{\cite[Proposition~6.1.1]{Web16}}]\label{6.1.1}
    Let $k$ be a field of characteristic $p$, $ G=\langle g\mid g^{p^n}=1\rangle $ the cyclic group of order $p^n$, $kG$ the group algebra. Then there is an isomorphism of $k$-algebras $ kG\simeq k[X]/(X^{p^n})$.
\end{Lemma}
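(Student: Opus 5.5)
The plan is to present $kG$ as a quotient of a polynomial ring and then change variables. Since $G$ is cyclic of order $p^n$ and generated by $g$, there is a surjective $k$-algebra homomorphism
$$\varphi:k[T]\longrightarrow kG,\qquad T\mapsto g.$$
It is surjective because $kG$ has $k$-basis $1,g,\dots,g^{p^n-1}$, and each basis element is the image of a power of $T$. Its kernel contains $T^{p^n}-1$. Comparing dimensions then gives $\ker\varphi=(T^{p^n}-1)$: the quotient $k[T]/(T^{p^n}-1)$ has dimension $p^n$, which equals $\dim_k kG=|G|$. Hence $kG\simeq k[T]/(T^{p^n}-1)$.

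The key step uses the characteristic. Since $\chara k=p$ and $k[T]$ is commutative, the Frobenius map $a\mapsto a^p$ is a ring endomorphism of $k[T]$. Equivalently, the binomial coefficients $\binom{p}{i}$ with $0<i<p$ vanish in $k$. Iterating $n$ times gives
$$(T-1)^{p^n}=T^{p^n}-1 \quad\text{in } k[T].$$
Therefore $kG\simeq k[T]/\bigl((T-1)^{p^n}\bigr)$.

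Finally, apply the $k$-algebra automorphism of $k[T]$ given by $T\mapsto X+1$. It carries the ideal $\bigl((T-1)^{p^n}\bigr)$ onto $(X^{p^n})$, which yields $kG\simeq k[X]/(X^{p^n})$. Explicitly, the isomorphism sends $X\mapsto g-1$.

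None of these steps is a real obstacle. The only point needing care is the identity $(T-1)^{p^n}=T^{p^n}-1$, which is where the hypothesis that $p$ equals the characteristic is used.
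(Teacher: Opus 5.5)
Your proof is correct. The paper does not prove this lemma; it cites Webb, whose argument is essentially yours: send $X\mapsto g-1$, note that $(g-1)^{p^n}=g^{p^n}-1=0$ in characteristic $p$, and conclude by surjectivity and a dimension count. The only difference is that you split this into the presentation $kG\simeq k[T]/(T^{p^n}-1)$ followed by the substitution $T\mapsto X+1$.
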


\begin{Lemma}[{\cite[Proposition~6.1.2]{Web16}}]\label{6.1.2}
        Let $k$ be a field of characteristic $p$. Every finitely generated $k[X]/(X^{p^n})$-module is a direct sum of cyclic modules $ U_r=k[X]/(X^r)$, $1\leq r\leq p^n$. The only simple module among these is $U_1$. Moreover, each $U_r$ has a unique composition series and is therefore indecomposable. Consequently, if $G$ is cyclic of order $p^n$, then the group algebra $kG$ has precisely $p^n$ indecomposable modules up to isomorphism, namely one of each dimension $r$, where $ 1\leq r\leq p^n. $
\end{Lemma}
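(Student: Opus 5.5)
The plan is to reduce everything to the structure theorem for finitely generated modules over a principal ideal domain, applied to $k[X]$.

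\textbf{Step 1 (decomposition).} Let $M$ be a finitely generated module over $A:=k[X]/(X^{p^n})$. Via the surjection $k[X]\twoheadrightarrow A$, regard $M$ as a finitely generated $k[X]$-module. Since $k[X]$ is a PID, the elementary divisor form of the structure theorem gives
$$M\simeq k[X]^{m}\oplus\bigoplus_i k[X]/(f_i^{a_i}),$$
with the $f_i$ monic irreducible. Because $X^{p^n}$ annihilates $M$, the free part is zero. Moreover each $f_i^{a_i}$ must divide $X^{p^n}$, so $f_i=X$ and $1\le a_i\le p^n$. Each summand $k[X]/(X^{a_i})$ is naturally an $A$-module, namely $U_{a_i}$, so $M$ is a direct sum of the cyclic modules $U_r$ with $1\le r\le p^n$.

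\textbf{Step 2 (structure of $U_r$).} The $A$-submodules of $U_r$ are exactly its ideals. These are the images of ideals of $k[X]$ containing $X^r$, i.e. the ideals $(X^i)/(X^r)$ for $0\le i\le r$. Hence the submodule lattice is the chain
$$U_r\supset XU_r\supset\cdots\supset X^{r-1}U_r\supset 0,$$
whose successive quotients are one-dimensional. This chain is therefore the unique composition series of $U_r$.

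From this:
\begin{itemize}
\item $U_r$ is indecomposable. A nontrivial decomposition $U_r=V\oplus W$ would produce two nonzero submodules meeting in $0$, which is impossible in a chain.
\item $U_r$ is simple if and only if $r=1$.
\item Any simple $A$-module is cyclic, hence of the form $U_r$ by Step 1, so $U_1\simeq k$ (with $X$ acting by $0$) is the only simple module.
\item The $U_r$ are pairwise non-isomorphic, since $\dim_k U_r=r$.
\end{itemize}

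\textbf{Step 3 (classification of indecomposables).} Any indecomposable finitely generated module is, by Step 1, a direct sum of some $U_r$'s. Indecomposability forces exactly one summand, so it is isomorphic to some $U_r$. This gives exactly $p^n$ indecomposables up to isomorphism, one in each dimension $1\le r\le p^n$. Uniqueness of the multiplicities, if wanted, follows from the uniqueness of elementary divisors over a PID. Alternatively, $\dim_k\ker(X^i|_M)$ determines how many $U_r$ with $r\ge i$ occur.

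\textbf{Step 4 (the group algebra).} Transport the classification to $kG$ for $G$ cyclic of order $p^n$ via Lemma~\ref{6.1.1}. Concretely, the isomorphism can be taken to be $g-1\mapsto X$, which is well defined because $(g-1)^{p^n}=g^{p^n}-1=0$ in characteristic $p$. Finitely generated $kG$-modules then correspond to finitely generated $A$-modules, and the count of indecomposables carries over.

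There is no serious obstacle. The only point requiring care is invoking the correct (elementary divisor) form of the PID structure theorem and noting that annihilation by $X^{p^n}$ kills both the free part and all irreducible factors other than $X$.
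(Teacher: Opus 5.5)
Your proposal is correct and follows essentially the same route as the proof in Webb that the paper cites (the paper gives no argument of its own beyond the citation). That route is: apply the structure theorem over the PID $k[X]$, use annihilation by $X^{p^n}$ to kill the free part and force every elementary divisor to be $X^r$ with $r\le p^n$, note that the submodules of $U_r$ form the chain $X^iU_r$ (so $U_r$ is uniserial, hence indecomposable), and transport to $kG$ via Lemma~\ref{6.1.1}. One cosmetic point: $(g-1)^{p^n}=0$ shows that $X\mapsto g-1$ is well defined on $k[X]/(X^{p^n})$, so it justifies the map $k[X]/(X^{p^n})\to kG$, not the direction $g-1\mapsto X$ that you wrote.
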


\section{The number of Frobenius periodic vector bundles on elliptic curves}

\begin{Proposition}\label{Frobenius-periodic-etale-representation}
Let $k$ be an algebraically closed field of characteristic $p>0$, $n\in \mathbb{N}_{> 0}$, $X$ a reduced connected scheme proper over $k$, $x\in X(k)$ and $F_X:X\rightarrow X$ the absolute Frobenius morphism. For a continuous representation $\rho:\pi^{\acute{e}t}_1(X,x)\rightarrow \GL_r(k)$, we denote its associated vector bundle by $E_{\rho}$. Then the following conditions are equivalent:
\begin{enumerate}
    \item $F_X^{n*}E_{\rho}\cong E_{\rho}$.
    \item There exists a continuous representation $\rho_n:\pi^{\acute{e}t}_1(X,x)\rightarrow \GL_r(\mathbb{F}_{p^n})\hookrightarrow \GL_r(k)$ such that $\rho\simeq \rho_n.$
\end{enumerate}
\end{Proposition}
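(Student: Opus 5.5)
By Lemma~\ref{Frobenius}, applied $n$ times, $F_X^{n*}E_\rho$ corresponds to $\rho^{(p^n)}$. By the bijection of Lemma~\ref{LaStequivalence}, condition (1) is therefore equivalent to $\rho^{(p^n)}\simeq\rho$. So it suffices to show that $\rho^{(p^n)}\simeq\rho$ holds if and only if $\rho$ is conjugate to a representation with values in $\GL_r(\mathbb{F}_{p^n})$.

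\textbf{(2)$\Rightarrow$(1).} Write $\rho=A\rho_nA^{-1}$ with $A\in\GL_r(k)$. Since the entries of $\rho_n(g)$ lie in $\mathbb{F}_{p^n}$, we have $\rho_n^{(p^n)}=\rho_n$. Let $\sigma:\GL_r(k)\to\GL_r(k)$ be the entrywise $p^n$-power map; it is a group endomorphism. Then
$$\rho^{(p^n)}=\sigma(A)\,\rho_n\,\sigma(A)^{-1}\simeq\rho_n\simeq\rho.$$

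\textbf{(1)$\Rightarrow$(2).} Suppose there is $B\in\GL_r(k)$ with $\sigma(\rho(g))=B\rho(g)B^{-1}$ for all $g$. The fixed group $\GL_r(k)^\sigma=\GL_r(\mathbb{F}_{p^n})$ is finite, and $\GL_r$ is connected. Lemma~\ref{Lang} then gives $A\in\GL_r(k)$ with $A^{-1}\sigma(A)=B$ (or $B^{-1}$, chosen according to the convention below).

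Set $\rho_n(g):=A\rho(g)A^{-1}$. Then
$$\sigma(\rho_n(g))=\sigma(A)\,\sigma(\rho(g))\,\sigma(A)^{-1}=\sigma(A)B\rho(g)B^{-1}\sigma(A)^{-1}.$$
This equals $\rho_n(g)$ as soon as $\sigma(A)B=A$, i.e.\ $B=\sigma(A)^{-1}A$. So we choose $A$ with $A^{-1}\sigma(A)=B^{-1}$; Lang--Steinberg applies equally to $B^{-1}$, and hence also to the variant $g\mapsto\sigma(g)g^{-1}$ after inverting. With this choice every entry of $\rho_n(g)$ is fixed by $x\mapsto x^{p^n}$, so $\rho_n(g)\in\GL_r(\mathbb{F}_{p^n})$.

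The map $\rho_n$ is a conjugate of the continuous homomorphism $\rho$, so it is again a continuous homomorphism, and $\rho_n\simeq\rho$ by construction.

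The main care point is getting the direction of $L_\sigma$ right in the Lang--Steinberg step, which the choice of $B^{-1}$ above handles. Reducedness and connectedness of $X$ are used only implicitly, to guarantee that the correspondence of Lemma~\ref{LaStequivalence} and the compatibility of Lemma~\ref{Frobenius} hold for $X$.
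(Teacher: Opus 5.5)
Your proof is correct and essentially identical to the paper's. You reduce (1) to $\rho^{(p^n)}\simeq\rho$ via Lemmas~\ref{LaStequivalence} and~\ref{Frobenius}, then apply Lemma~\ref{Lang} to write the inverse of your intertwiner as $A^{-1}\sigma(A)$; your $B^{-1}$ is exactly the paper's $A$, and conjugating $\rho$ by the resulting matrix lands it in $\GL_r(\mathbb{F}_{p^n})$, while the converse is the same direct computation as in the paper.
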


\begin{proof}
Define the map $\sigma_n:\GL_r(k)\rightarrow \GL_r(k), (a_{ij})\mapsto (a_{ij}^{p^n})$ and we denote $\sigma_n(A)$ by $A^{(p^n)}$ for any $A\in \GL_r(k)$. Since $\chara(k)=p$, this map is a homomorphism. Since the polynomial $x^{p^n}=x$ has $p^n$ roots at most, it follows that the fixed point group $\GL_r(k)^{\sigma_n}\subseteq \GL_r(\mathbb F_{p^n})$ is finite.

$(1)\Rightarrow (2)$ Suppose $F_X^{n*}E_{\rho}\cong E_{\rho}$. By Lemma~\ref{LaStequivalence} and Lemma~\ref{Frobenius}, we have $\rho^{(p^n)}\simeq\rho$. Hence there exists $A\in \GL_r(k)$ such that $\rho^{(p^n)}(\gamma)=A^{-1}\rho(\gamma)A$ for every $\gamma\in \pi_1^{\acute{e}t}(X,x)$.

By Lemma~\ref{Lang}, for the algebraic group $\GL_r(k)$ over $k$, the morphism $\GL_r(k)\rightarrow \GL_r(k)$, $B\mapsto  B^{-1}B^{(p^n)}$ is surjective. Thus we may choose $B\in \GL_r(k)$ such that $A= B^{-1}B^{(p^n)}$. Define the representation $\rho_n:\pi_1^{\acute{e}t}(X,x)\rightarrow \GL_r(k),\gamma\mapsto B\rho(\gamma)B^{-1}$ for any $\gamma\in \pi_1^{\acute{e}t}(X,x)$. Then $\rho_n\simeq \rho$ and 
$$\begin{aligned}
    \rho_n(\gamma)^{(p^n)} &=B^{(p^n)}\rho(\gamma)^{(p^n)}(B^{-1})^{(p^n)}\\
    &=B^{(p^n)}\rho^{(p^n)}(\gamma)(B^{(p^n)})^{-1}\\
    &=B^{(p^n)}A^{-1}\rho(\gamma)A(B^{(p^n)})^{-1}\\
    &=B^{(p^n)}((B^{(p^n)})^{-1} B)\rho(\gamma) (B^{-1}B^{(p^n)})(B^{(p^n)})^{-1}\\
    &=B\rho(\gamma)B^{-1}\\
    &=\rho_n(\gamma)
\end{aligned}$$
for any $\gamma\in \pi_1^{\acute{e}t}(X,x)$. Therefore every matrix $\rho_n(\gamma)$ is fixed by $\sigma_n$, which implies that $\Ima(\rho_n)\subseteq\GL_r(k)^{\sigma_n}\subseteq \GL_r(\mathbb F_{p^n})$. Hence $\rho$ is similar over $k$ to a representation defined over $\mathbb F_{p^n}$.

$(2)\Rightarrow (1)$ Assume that $\rho$ is similar over $k$ to an $\mathbb F_{p^n}$-valued representation $\rho_n$. Thus there exists $B\in\GL_r(k)$ such that $\rho_n(\gamma):=B^{-1}\rho(\gamma)B$ lies in $\GL_r(\mathbb F_{p^n})$ for every $\gamma\in\pi_1^{\acute{e}t}(X,x)$. Since every element of $\mathbb F_{p^n}$ is fixed by $\sigma_n$, we have $\rho_n^{(p^n)}=\rho_n$. It follows that $\rho^{(p^n)}$ is isomorphic to $\rho$, which implies that $(F_X^n)^*E_\rho\simeq E_\rho$ by Lemma~\ref{LaStequivalence} and Lemma~\ref{Frobenius}.
\end{proof}

\begin{Proposition}\label{directproduct}
Let $k$ be an algebraically closed field of characteristic $p>0$, $X$ an ordinary elliptic curve over $k$, $x\in X(k)$, $\rho:\pi^{\acute{e}t}_{1}(X,x)\rightarrow \GL_r(k)$ a continuous $k$-representation, where $\GL_r(k)$ is endowed with the discrete topology. Then the image of $\rho$ is naturally an internal direct product $\rho(\pi^{\acute{e}t}_{1}(X,x))=A\times C$, where $A$ is a finite abelian group of order prime to $p$, and $C$ is a finite cyclic $p$-group.
\end{Proposition}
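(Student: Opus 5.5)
Write $\Pi=\pi_1^{\acute{e}t}(X,x)$. By the first lemma of the preliminaries (Kunz), for $X$ ordinary we have an isomorphism $\Pi\cong P'\times \mathbb Z_p$, where $P'=\prod_{\ell\neq p}\mathbb Z_\ell^2$. Hence $\Pi$ is abelian and $G:=\rho(\Pi)$ is abelian.

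The first step is to show that $G$ is finite. Since $\GL_r(k)$ carries the discrete topology, $\{1\}$ is open, so $\ker\rho$ is an open subgroup of the compact group $\Pi$. It therefore has finite index, and $G\cong \Pi/\ker\rho$ is a finite abelian group.

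Next, I will decompose $G$ along the product structure. Set $A:=\rho(P'\times\{0\})$ and $C:=\rho(\{0\}\times\mathbb Z_p)$.

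\emph{The subgroup $C$.} It is a continuous image of $\mathbb Z_p$, which is topologically generated by $1$. Its image is finite, so its kernel is open, and the open subgroups of $\mathbb Z_p$ are exactly the $p^m\mathbb Z_p$. Thus $C\cong \mathbb Z_p/p^m\mathbb Z_p$ is a cyclic $p$-group, generated by $\rho(0,1)$; this also follows from Lemma~\ref{determinedbygenerator}.

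\emph{The subgroup $A$.} Every open subgroup of $P'$ has index prime to $p$. Indeed, an open subgroup contains a basic open subgroup $\prod_{\ell\in S}\ell^{m_\ell}\mathbb Z_\ell^2\times\prod_{\ell\notin S}\mathbb Z_\ell^2$ for a finite set $S$ of primes $\ell\neq p$, and that basic subgroup has index $\prod_{\ell\in S}\ell^{2m_\ell}$. Equivalently, $P'$ is pro-prime-to-$p$. Hence $|A|=[P':P'\cap\ker\rho]$ is prime to $p$.

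Finally, I will check that $G=A\times C$ internally. Since $\Pi=(P'\times 0)+(0\times\mathbb Z_p)$, we get $G=AC$. The group $G$ is abelian, so both $A$ and $C$ are normal. Their intersection is trivial because $|A|$ and $|C|$ are coprime. Therefore $G=A\times C$.

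The only step needing care is the claim that $A$ has order prime to $p$. This is the pro-prime-to-$p$ argument above. Alternatively, one can note that for $g\in A$ of order $p$, the preimage of $\langle g\rangle$ is an open subgroup of $P'$ of index divisible by $p$, which is impossible. The word ``naturally'' in the statement is justified because $A$ is then the prime-to-$p$ part of $G$ and $C$ is its Sylow $p$-subgroup, so the decomposition is canonical and does not depend on the chosen isomorphism for $\Pi$.
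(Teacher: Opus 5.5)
Your proof is correct and follows the same route as the paper. You use the same subgroups $A=\rho(P'\times\{0\})$ and $C=\rho(\{0\}\times\mathbb Z_p)$, get finiteness from the open kernel, and conclude $G=AC$ with commuting factors and $A\cap C=\{1\}$ by coprimality of orders. Beyond the paper, you justify two facts it merely asserts (finite continuous quotients of $P'$ have prime-to-$p$ order, and those of $\mathbb Z_p$ are cyclic $p$-groups), and you explain ``naturally'' via the canonical splitting $G=G_{p'}\times G_p$.

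There is one small slip in your optional aside. The preimage of $\langle g\rangle$ in $P'$ has index $|A|/p$, which need not be divisible by $p$. The open subgroup whose index is divisible by $p$ is the kernel $P'\cap\ker\rho$, of index $|A|$, so the aside should be phrased in terms of the kernel.
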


\begin{proof}
Since $X$ is ordinary, we have $\pi^{\acute{e}t}_1(X,x)\cong \left(\prod_{\ell\neq p}\mathbb Z_\ell^2\right)\times \mathbb Z_p$. Set $A:=\rho((\prod_{\ell\neq p}\mathbb Z_\ell^2)\times \{1\})$, $C:=\rho(\{1\}\times \mathbb Z_p)$. Since $(\gamma,z)=(\gamma,1)(1,z)$ for every $(\gamma,z)\in (\prod_{\ell\neq p}\mathbb Z_\ell^2)\times \mathbb Z_p$, we have $\rho(\gamma,z)=\rho(\gamma,1)\rho(1,z)$. Thus $\rho(\pi^{\acute{e}t}_{1}(X,x))=AC$. 

The kernel of $\rho$ is open, so $\rho$ factors through a finite quotient of the profinite group $\pi^{\acute{e}t}_{1}(X,x)$. The group $A$ and $C$ are finite quotients of $\pi^{\acute{e}t}_{1}(X,x)_{p'}:=\prod_{\ell\neq p}\mathbb Z_\ell^2$ and $\mathbb{Z}_p$ respectively. Therefore $A$ is finite abelian and its order is prime to $p$ and $C\cong \mathbb Z/p^a\mathbb Z$ for some $a\geq 0$. Moreover, the two factors $\pi^{\acute{e}t}_{1}(X,x)_{p'}\times\{1\}$ and $\{1\}\times\mathbb Z_p$ commute inside $\pi^{\acute{e}t}_{1}(X,x)$. Hence their images $A$ and $C$ commute inside $\GL_r(k)$. Since $|A|$ is prime to $p$ and $|C|$ is a power of $p$, we have $A\cap C=\{1\}$. It follows that $\rho(\pi^{\acute{e}t}_{1}(X,x))=AC\cong A\times C$.
\end{proof}

\begin{Proposition}
    Let $k$ be an algebraically closed field of characteristic $p>0$, $X$ an ordinary elliptic curve over $k$, $x\in X(k)$. Then there exists a bijection between the isomorphism classes of indecomposable $k$-representations of $\pi_1^{\acute{e}t}(X,x)$ and the pairs $(\chi,[\rho_m])$ where $\chi\in \Hom_{\mathrm{cont}}(\prod_{\ell\neq p}\mathbb Z_\ell^2,k^*)$ and $\rho_m$ is an indecomposable representation of $\mathbb{Z}_p$.
\end{Proposition}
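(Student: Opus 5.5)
Throughout, all representations are continuous, with $\GL_r(k)$ discrete as in Proposition~\ref{directproduct}, so each one factors through a finite quotient. Write $\Pi:=\pi_1^{\acute{e}t}(X,x)\cong P\times\mathbb Z_p$ with $P:=\prod_{\ell\neq p}\mathbb Z_\ell^2$. The plan is to decompose a representation $\rho$ of $\Pi$ on $V=k^r$ into generalized eigenspaces for the prime-to-$p$ part. Then we show that indecomposability forces a single character on the $P$-factor, tensored with an indecomposable representation of $\mathbb Z_p$.

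\emph{Decomposing by characters of $P$.} By Proposition~\ref{directproduct}, $\rho(\Pi)=A\times C$ with $A=\rho(P\times\{1\})$ finite abelian of order prime to $p$ and $C$ cyclic of $p$-power order. Since $|A|$ is invertible in $k$ and $k$ is algebraically closed, Maschke's theorem makes $V|_A$ semisimple, hence a direct sum of $1$-dimensional characters. This gives $V=\bigoplus_{\chi}V_\chi$, where $V_\chi$ is the $\chi$-isotypic part and $\chi$ runs over characters of $A$, equivalently over continuous characters of $P$. Because $C$ commutes with $A$, each $V_\chi$ is $C$-stable, so it is a $\Pi$-subrepresentation. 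Hence an indecomposable $\rho$ has exactly one $\chi$ with $V=V_\chi$. On such a $V$ the group $P$ acts by the scalar $\chi$, and so $\rho\cong\chi\otimes\rho_m$, where $\rho_m:=\rho|_{\{1\}\times\mathbb Z_p}$.

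\emph{The bijection.} I would send $\rho\mapsto(\chi,[\rho|_{\mathbb Z_p}])$. The key remark is that when $P$ acts by scalars, $\Pi$-subspaces and $\Pi$-homomorphisms coincide with $\mathbb Z_p$-subspaces and $\mathbb Z_p$-homomorphisms. Therefore $\rho$ is indecomposable exactly when $\rho_m$ is, and the map is well defined on isomorphism classes. It is injective: if $\chi\otimes\rho_m\cong\chi'\otimes\rho'_m$, compare the $P$-action to get $\chi=\chi'$, and then the $\mathbb Z_p$-isomorphism is the same intertwiner. For surjectivity, given $(\chi,\rho_m)$ define $\rho(\gamma,z):=\chi(\gamma)\rho_m(z)$. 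This is a homomorphism since the two factors commute, and it is continuous since its kernel contains $\ker\chi\times\ker\rho_m$, which is open. It is indecomposable by the same subspace remark.

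\emph{Main obstacle.} The delicate step is the first one: justifying that the $A$-action is diagonalizable, and identifying characters of the finite group $A$ with continuous characters of $P$. A character $\chi$ of $A$ composed with $\rho$ gives a continuous character of $P$. Conversely, every character appearing in $V$ is of this form, so the isotypic decomposition is indexed exactly by $\Hom_{\mathrm{cont}}(P,k^*)$. One small check: the $p$-part $C$ could share eigenvalues with $A$ across summands, but this does not matter, because $C$ preserves each $V_\chi$ simply by commuting with $A$. By Lemma~\ref{6.1.2}, the indecomposables $\rho_m$ of $\mathbb Z_p$ are then the Jordan-block modules $U_m$ through the finite cyclic quotients, which makes the classification explicit.
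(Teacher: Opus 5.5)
Your proposal is correct and follows essentially the same route as the paper. Both arguments decompose $V$ into $\chi$-isotypic pieces for the prime-to-$p$ image $A$, note that the commuting cyclic $p$-part $C$ preserves each piece, and send an indecomposable $\rho$ to $(\chi,[\rho|_{\mathbb Z_p}])$, with inverse $(\gamma,z)\mapsto\chi(\gamma)\rho_m(z)$. The differences are minor: you use Maschke's theorem where the paper uses separability of $x^{\ord(a)}-1$ to diagonalize $A$, and you spell out more explicitly than the paper why $\rho$ is indecomposable exactly when $\rho_m$ is, and why the constructed representation is continuous.
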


\begin{proof}
    Since $X$ is an ordinary elliptic curve, its
étale fundamental group is
\[
\pi_1^{\acute{e}t}(X,x)\simeq
\left(\prod_{\ell\neq p}\mathbb Z_\ell^2\right)\times \mathbb Z_p.
\]
Here $\ell$ runs over prime numbers different from $p$. Thus $\prod_{\ell\neq p}\mathbb Z_\ell^2$ is the prime-to-$p$ part, while $\mathbb Z_p$ is the pro-$p$ part. 

We now describe the indecomposable continuous $k$-representations of $\pi_1^{\acute{e}t}(X,x)$. By Lemma~\ref{directproduct}, the image of continuous $k$-representations of $\pi_1^{\acute{e}t}(X,x)$ has the form $A\times C_{p^a}$, where $A$ is a finite abelian group of order prime to $p$, and $C_{p^a}\cong \mathbb Z/p^a\mathbb Z$. Thus it is enough to describe $k$-representations of $A\times C_{p^a}$.

Let $(V,\rho)$ be a finite dimensional $k$-representation of $A\times C_{p^a}$. By restriction along $i_A:A\hookrightarrow A\times C_{p^a}$, $a\mapsto (a,1)$, $(V,\rho_A:=\rho\circ i_A)$ is an $A$-representation. Similarly $(V,\rho_C:=\rho\circ i_C)$ is a $C_{p^a}$-representation. 

For any $a\in A$, we have $\rho_A(a)^{\ord(a)}=I$. Since $(p, |A|)=1$, we have $p\nmid \ord(a)$. It follows that $x^{\ord(a)}-1$ is a separable polynomial in $k[x]$, which implies that $\rho_A(a)$ is diagonalizable. Moreover, $A$ is abelian, so for any $a,b\in A$, we have $\rho_A(a)\rho_A(b)=\rho_A(b)\rho_A(a)$. Hence there exists a basis $\{v_1,\cdots, v_n\}$ of $V$ such that $\rho_A(a)v_i=\chi_i(a)v_i$ for any $a\in A$ and any $v_i$ where $\chi_i(a)\in k^*$. Note that for any $a,b\in A$, we have $\chi_i(a)\chi_i(b)v_i=\rho_A(a)\rho_A(b)v_i=\rho_A(ab)v_i=\chi_i(ab)v_i$, which implies that $\chi_i:A\rightarrow k^*$ is in fact a character of $A$. Therefore, we obtain a decomposition of $V$ determined by the characters of $A$: $V=\oplus_\chi V_\chi$, where $V_\chi=\{v\in V: \rho_A(a) v=\chi(a)v\ \text{for all }a\in A\}$ where $\chi\in \Hom_{\mathrm{cont}}(A,k^*)$. 

Fix $\chi\in \Hom_{\mathrm{cont}}(A,k^*)$, note that for any $a\in A$, $c\in C_{p^a}$ and $v\in V_\chi$, we have  $\rho_A(a)\rho_C(c)v=\rho_C(c)\rho_A(a)v=\rho_C(c)\chi(a)v=\chi(a)\rho_C(c)v$. It follows that $\rho_C(c)v\in V_\chi$, and hence $(V_\chi,\rho|_{V_\chi})$ is an $A\times C_{p^a}$-representation. Therefore, the decomposition $V=\oplus_\chi V_\chi$ is actually a decomposition as an $A\times C_{p^a}$-representation which coincides with $(V,\rho)$. 

Then we describe the indecomposable representations of $C_{p^a}$. Let $C_{p^a}=\langle g\rangle$. By Lemma~\ref{6.1.1} and Lemma~\ref{6.1.2}, the indecomposable finite dimensional modules over $k[X]/(X^{p^a})$ are $k[X]/(X^m)$, with $1\leq m\leq p^a$. In matrix language, this means that there exists an indecomposable $k$-representation $\rho_m:C_{p^a}\rightarrow \GL_m(k)$ sending the generator $g$ to the unipotent Jordan block $J_m(1)$. It follows that for any integer $1\leq m\leq p^a$, there exists a unique isomorphism class of indecomposable $k$-representation of $C_{p^a}$ of dimension $m$ and there is no indecomposable $k$-representation of $C_{p^a}$ of dimension greater than $p^a$.

Let $(V',\rho')$ be an indecomposable $k$-representation of $\pi_1^{\acute{e}t}(X,x)$, then the image of $\rho'$ is of the form $A\times C_{p^a}$, where $A$ is a finite abelian group of order prime to $p$ and $C_{p^a}\cong \mathbb Z/p^a\mathbb Z$ for some $a\geq0$. Then $(V',\rho')$ is an indecomposable $k$-representation of $A\times C_{p^a}$. Hence there exists a unique character $\chi\in \Hom_{\mathrm{cont}}(A,k^*)$ with $V'=V'_{\chi}$ and $(V',\rho'_C)$ is an indecomposable $k$-representation of $C_{p^a}$. Composing with $\prod_{\ell\neq p}\mathbb Z_\ell^2\twoheadrightarrow A$ and $\mathbb Z_p\twoheadrightarrow C_{p^a}$, we obtain $\chi'\in\Hom_{\mathrm{cont}}(\prod_{\ell\neq p}\mathbb Z_\ell^2,k^*)$ and an indecomposable $k$-representation of $\mathbb Z_p$. Conversely, given a pair $(\chi,\rho_m)$ where $\chi\in\Hom_{\mathrm{cont}}(\prod_{\ell\neq p}\mathbb Z_\ell^2,k^*)$ and $\rho_m:\mathbb Z_p\rightarrow \GL_m(k)$ is an indecomposable $k$-representation, we can define an indecomposable $k$-representation $\rho:\pi_1^{\acute{e}t}(X,x)\rightarrow\GL_m(k),(a,c)\mapsto\rho_{m}(c)\chi(a)=\chi(a)\rho_{m}(c)$ for any $a\in \prod_{\ell\neq p}\mathbb Z_\ell^2$, $c\in \mathbb Z_p$. 

Moreover, two indecomposable $k$-representations $(V_1,\rho_1)$ and $(V_2,\rho_2)$ of $\pi_1^{\acute{e}t}(X,x)$ are isomorphic if and only if there exists an invertible matrix $P$ which induces an isomorphism $V_1\cong V_2$ such that $\rho_1(a,c)=P^{-1}\rho_2(a,c)P$ for any $(a,c)\in \pi_1^{\acute{e}t}(X,x)$. This implies that the corresponding pairs $(\chi_1,\rho_{m_1})$ and $(\chi_2,\rho_{m_2})$ satisfy $\chi_1=\chi_2$ and $\rho_{m_1}(c)\simeq \rho_{m_2}(c)$ for any $c\in\mathbb Z_p$. Hence there is a bijection between the isomorphism classes of the indecomposable $k$-representations of $\pi_1^{\acute{e}t}(X,x)$ and the pairs $(\chi,[\rho_m])$ where $\chi\in \Hom_{\mathrm{cont}}(\prod_{\ell\neq p}\mathbb Z_\ell^2,k^*)$ and $\rho_m$ is an indecomposable representation of $\mathbb{Z}_p$.
\end{proof}

\begin{Proposition}\label{ordinary}
Let $k$ be an algebraically closed field of characteristic $p>0$, $X$ an ordinary elliptic curve over $k$, $x\in X(k)$, $n\in\mathbb{N}_{>0}$, $N_r^{\mid n}(\text{resp. }N_r^{=n})$ denote the number of isomorphism classes of rank $r$ vector bundles $E$ on $X$ satisfying $(F_X^n)^*E\cong E($resp. whose minimal Frobenius period is exactly $n)$. Then we have the following generating series
\[
    \sum_{r\geq 0} N_r^{\mid n}T^r=\prod_{m\geq 1}\prod_{d\geq 1}\left(1-T^{md}\right)^{-\frac{1}{d}\sum_{e\mid d}\mu\!\left(\frac{d}{e}\right)(p^{en}-1)^2}.
\]
Here $\mu$ is the classical M\"obius function. Consequently, we have 
$$N_r^{\mid n}=\sum_{\begin{smallmatrix}
    z_1,\dots,z_r\geq0\\
    z_1+2z_2+\cdots+rz_r=r
\end{smallmatrix} }\prod_{s=1}^r\binom{z_s-1+\sum_{d\mid s}\frac{1}{d}\sum_{e\mid d}\mu\!\left(\frac{d}{e}\right)(p^{ne}-1)^2}{z_s},$$
$$N_{r}^{=n}=\sum_{t\mid n}\mu\!\left(\frac{n}{t}\right)\sum_{\substack{z_1,\dots,z_r\geq 0\\z_1+2z_2+\cdots+rz_r=r}}\prod_{s=1}^r\binom{z_s-1+\sum_{d\mid s}\frac{1}{d}\sum_{e\mid d}\mu\!\left(\frac{d}{e}\right)(p^{te}-1)^2}{z_s}.$$
\end{Proposition}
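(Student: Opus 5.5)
The plan is to count isomorphism classes of representations rather than bundles. By Lemma~\ref{LaStequivalence} and Lemma~\ref{Frobenius}, together with Proposition~\ref{Frobenius-periodic-etale-representation}, $N_r^{\mid n}$ equals the number of isomorphism classes of continuous $\rho:\pi_1^{\acute{e}t}(X,x)\to\GL_r(k)$ with $\rho^{(p^n)}\simeq\rho$. Here I use that Lange--Stuhler already places every Frobenius-periodic bundle among the \'etale trivializable ones.

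\textbf{Step 1: decomposition and Frobenius action.} By Krull--Schmidt for finite-dimensional representations of the finite image group, every $\rho$ is a direct sum of indecomposables, and this decomposition is unique up to permutation and isomorphism. By the preceding proposition, the indecomposables are the pairs $(\chi,J_m)$ with $\chi\in\Hom_{\mathrm{cont}}(\prod_{\ell\neq p}\mathbb Z_\ell^2,k^*)$ and $m\geq1$. Here $m$ is unbounded, since $a$ may be taken arbitrarily large in $\mathbb Z_p\twoheadrightarrow C_{p^a}$. Applying $(\cdot)^{(p)}$ entrywise sends $\chi\cdot J_m(1)$ to $\chi^p\cdot J_m(1)^{(p)}=\chi^p\cdot J_m(1)$, because the entries of $J_m(1)$ lie in $\mathbb F_p$. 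Hence $(\cdot)^{(p^n)}$ acts on indecomposables by $\phi:(\chi,J_m)\mapsto(\chi^{p^n},J_m)$. This map commutes with direct sums and is a bijection on isomorphism classes, since $k$ is perfect. Therefore $\rho^{(p^n)}\simeq\rho$ if and only if the multiset of indecomposable summands of $\rho$ is $\phi$-stable. Such a multiset is exactly a finite multiset of $\phi$-orbits, and an orbit of size $d$ through $(\chi,J_m)$ contributes rank $md$.

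\textbf{Step 2: counting orbits.} A character fixed by $\phi^e$ satisfies $\chi^{p^{ne}-1}=1$. So it takes values in $\mathbb F_{p^{ne}}^*$ on each of the two topological generators of $\prod_{\ell\neq p}\mathbb Z_\ell^2=(\prod_{\ell\ne p}\mathbb Z_\ell)^2$. By Lemma~\ref{Numberofcharacter} and Lemma~\ref{determinedbygenerator}, there are $(p^{ne}-1)^2$ such characters. Möbius inversion then shows that the number of characters with exact $\phi$-orbit length $d$ is $\sum_{e\mid d}\mu(d/e)(p^{ne}-1)^2$. Dividing by $d$ gives the number $M_d$ of orbits of size $d$. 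Counting multisets of orbits yields the Euler-product generating series $\prod_{m,d\geq1}(1-T^{md})^{-M_d}$.

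\textbf{Step 3: coefficient extraction and exact period.} Grouping the factors by $s=md$, I will rewrite the product as $\prod_{s\geq1}(1-T^s)^{-a_s}$ with $a_s=\sum_{d\mid s}M_d$. Expanding each factor by the negative binomial series, $(1-T^s)^{-a}=\sum_z\binom{z+a-1}{z}T^{sz}$, and collecting the coefficient of $T^r$ gives the stated formula for $N_r^{\mid n}$. For $N_r^{=n}$, the set $\{n: (F_X^n)^*E\simeq E\}\cup\{0\}$ is a subgroup of $\mathbb Z$. This holds because $F_X^*$ acts bijectively on isomorphism classes of \'etale trivializable bundles (it corresponds to $\rho\mapsto\rho^{(p)}$, which is invertible). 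Hence $(F_X^n)^*E\simeq E$ if and only if the minimal period divides $n$. This gives $N_r^{\mid n}=\sum_{t\mid n}N_r^{=t}$, and Möbius inversion gives the last formula.

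The main obstacle is Step 1. It requires justifying Krull--Schmidt uniqueness in the continuous setting, which reduces to a common finite quotient through which all relevant representations factor. It also requires checking carefully that $\phi$-stability of the multiset of summands is equivalent to $\rho^{(p^n)}\simeq\rho$, and that distinct $\phi$-orbit multisets give non-isomorphic $\rho$. I would handle both points by passing to a finite quotient $A\times C_{p^a}$ that contains the images of $\rho$ and $\rho^{(p^n)}$ simultaneously.
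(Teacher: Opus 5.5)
Your proposal is correct and follows the paper's route. The common steps are: the indecomposables $(\chi,J_m)$, on which entrywise Frobenius acts only through $\chi$; counting $\sigma_n$-orbits on $\Lambda$ from $|\Lambda^{\sigma_n^e}|=(p^{ne}-1)^2$ by M\"obius inversion; the product over orbits regrouped by $s=md$; and a final M\"obius inversion in $n$. You also make explicit two facts the paper uses without proof: Krull--Schmidt uniqueness on a common finite quotient, and that $(F_X^n)^*E\simeq E$ holds exactly when the minimal period divides $n$.
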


\begin{proof}
Let $\rho_m:\pi_1^{\acute{e}t}(X,x)\rightarrow \GL_m(k)$ be an indecomposable $k$-representations of $\pi_1^{\acute{e}t}(X,x)$ and denote its corresponding pair by $(\chi,[\rho_m])$. For any $a\in \prod_{\ell\neq p}\mathbb Z_\ell^2$ and $c\in \mathbb{Z}_p$, we have $(\chi(a)\rho_m(c))^{(p^n)}=\chi(a)^{(p^n)}\rho_m(c)^{(p^n)}=\chi^{(p^n)}(a)\rho_m^{(p^n)}(c)\simeq\chi^{(p^n)}(a)\rho_m(c)$. Hence $\rho_m\simeq \rho_m^{(p^n)}$ if and only if $\chi^{(p^n)}=\chi$. 

Let $\Lambda:= \operatorname{Hom}_{\mathrm{cont}} \left(\prod_{\ell\neq p}\mathbb Z_\ell^2,k^*\right)$ and $\Lambda^{\sigma_m}:=\{\chi\in\Lambda\mid \chi^{(p^{m})}=\chi\}$ where $m\in \mathbb{N}_{>0}$. So $\chi\in \Lambda^{\sigma_m}$ precisely when its values lie in $\mathbb F_{p^{m}}^*$. Then we have $|\Lambda^{\sigma_m}|=(p^{m}-1)^2$ by Lemma~\ref{Numberofcharacter}. 

Fix a positive integer $n$, define $\sigma_n:\Lambda\rightarrow \Lambda, \chi\mapsto \chi^{(p^n)}$. For $\chi\in\Lambda$, define the period of $\chi$ under $\sigma_n$ by $\peri_{\sigma_n}(\chi)$ to be the minimal positive integer $d$ such that $\chi=\chi^{(p^{nd})}$. For $\chi\in\Lambda$ of period $d$, the orbit of $\chi$ under $\sigma_n$ is $\orb_{{\sigma_n}}(\chi):=\{\chi,\chi^{(p^{n})},\cdots, \chi^{(p^{n(d-1)})}\}$ and we define $\OBT_{\sigma_n}(d):=\{\orb_{\sigma_n}(\chi)|\chi\in \Lambda,\,\peri_{\sigma_n}(\chi)=d\}$. Let $a_n(d)$ be the number of orbits of length $d$ under $\sigma_n$. Fix $e\in \mathbb{N}_{>0}$, we have 
$$\Lambda^{\sigma_n^e}=\bigcup_{\begin{smallmatrix}
    d|e,\chi\in \Lambda\\
    \peri_{\sigma_n}(\chi)=d
\end{smallmatrix}}\orb_{\sigma_n}(\chi),\quad \left|\Lambda^{\sigma_n^e}\right|=(p^{en}-1)^2,\quad\left|\bigcup\limits_{\begin{smallmatrix}
    \chi\in \Lambda\\
    \peri_{\sigma_n}(\chi)=d
\end{smallmatrix}}\orb_{\sigma_n}(\chi)\right|=da_n(d).$$
It follows that $(p^{en}-1)^2=\sum_{d\mid e}d\,a_n(d)$. By M\"obius inversion, this gives $a_n(d)= \frac{1}{d}\sum_{e\mid d}\mu\!\left(\frac{d}{e}\right)(p^{en}-1)^2$. 

For any nontrivial $k$-representation $(V,\rho)$ of $\pi_1^{\acute{e}t}(X,x)$, consider $V\simeq\bigoplus\limits_{i\in I} k_iV_{i}$ where $k_i\in\mathbb{N}_{>0}$, $V_i$ is indecomposable with $\dim_k V_i=m_i$ for each $i$ and $V_i\not\simeq V_j$ if $i\neq j$. Then for each $i$, there exists a pair $(\chi_i,[\rho_{m_i}])$ corresponding to the indecomposable $k$-representation $(V_i,\rho|_{V_i})$ of $\pi_1^{\acute{e}t}(X,x)$. Suppose $(V,\rho)$ satisfies $\rho^{(p^{n})}\cong \rho$ for some $n\in\mathbb{N}_{>0}$. Now we fix a $V_i$ and its corresponding pair $(\chi_i,[\rho_{m_i}])$. It follows that $\chi_i$ is of finite period under $\sigma_n$, otherwise $(\chi_i^{(p^{nj})},[\rho_{m_i}])$ will appear in this decomposition for infinitely many $j$, which contradicts $\dim_k V<\infty$. So for the same reason, if $\peri_{\sigma_n}(\chi_i)=d_i$, then we have 
$$V\simeq k_iV_{i}\bigoplus k_iV_i^{(p^n)}\bigoplus\cdots\bigoplus k_iV_i^{(p^{(d_i-1)n})}\bigoplus\left(\bigoplus_{i'} k_{i'}V_{i'}\right),$$
such that $V_i^{(p^{fn})}$ corresponds to $(\chi^{(p^{fn})},[\rho_{m_i}])$, $1\leq f\leq d_i-1$. Repeating this method, we obtain $V\simeq \bigoplus\limits_{t\in T} k_tW_t$, where $W_{t}=V_{i_t}\oplus V_{i_t}^{(p^{n})}\oplus\cdots\oplus V_{i_t}^{(p^{(d_{i_t}-1)n})}$ for some $V_{i_t}$ and $W_{t_1}\not\simeq W_{t_2}$ if $t_1\neq t_2$. 

In other words, for any $r$-dimensional $k$-representation $(V,\rho)$ of $\pi_1^{\acute{e}t}(X,x)$ satisfying $\rho^{(p^n)}\simeq \rho$, there exists a unique decomposition 
$$V\simeq \bigoplus_{t\in T} k_tW_t= \bigoplus_{m\in\mathbb{N}_{>0}}\bigoplus_{d\in\mathbb{N}_{>0}}\bigoplus_{\Omega\in\OBT_{\sigma_n}(d)}k_{(\Omega,[\rho_m])}W_{(\Omega,[\rho_m])},$$ 
such that $\rho_m$ is an $m$-dimensional indecomposable representation of $\mathbb{Z}_p$, $k_{(\Omega,[\rho_m])}\in \mathbb{N}$, $\Omega=\orb_{\sigma_{n}}(\chi)$ for some $\chi\in \Lambda$ and 
$$W_{(\Omega,[\rho_m])}=V_{(\chi,[\rho_m])}\oplus V_{(\chi^{(p^{n})},[\rho_m])}\oplus\cdots \oplus V_{(\chi^{(p^{(\peri_{\sigma_n}(\chi)-1)n})},[\rho_m])}$$ 
where $V_{(\chi^{(p^{fn})},[\rho_m])}$ corresponds to the pair $({\chi^{(p^{fn})},[\rho_m]})$ for $0\leq f\leq \peri_{\sigma_n}(\chi)-1$. Then $k_{(\Omega,[\rho_m])}\neq 0$ if and only if $(\chi,[\rho_m])$ corresponds to some nontrivial indecomposable $k$-subrepresentation of $(V,\rho)$. Since there are $a_n(d)$ orbits of length $d$ under $\sigma_n$ in $\Lambda$, the total generating function is
\[
\sum_{r\geq 0}N_r^{\mid n}T^r=\prod_{m\geq 1}\prod_{d\geq 1}(1+T^{md}+T^{2md}+\cdots)^{a_n(d)}=\prod_{m\geq 1}\prod_{d\geq 1}(1-T^{md})^{-a_n(d)}.
\]
Therefore, we have 
$$\prod_{m\geq 1}\prod_{d\geq 1}(1-T^{md})^{-a_n(d)}=\prod_{s\geq 1}(1-T^s)^{-\sum_{d\mid s}a_n(d)}.$$
By the binomial expansion, $(1-T^s)^{-\sum_{d\mid s}a_n(d)}=\sum\limits_{z\geq 0}\binom{z-1+\sum_{d\mid s}a_n(d)}{z}T^{sz}$. Hence we have
$$\sum_{r\geq 0}N_r^{\mid n}T^r=\prod_{s\geq 1}(1-T^s)^{-\sum_{d\mid s}a_n(d)}=\prod_{s\geq 1}\sum_{z\geq 0}\binom{z-1+\sum_{d\mid s}a_n(d)}{z}T^{sz}.$$
Therefore, the coefficient of $T^r$ is 
$$N_r^{\mid n}=\sum_{\begin{smallmatrix}
    z_1,\dots,z_r\geq0\\
    z_1+2z_2+\cdots+rz_r=r
\end{smallmatrix} }\prod_{s=1}^r\binom{z_s-1+\sum_{d\mid s}a_n(d)}{z_s}.$$
Note that $N_r^{\mid n}=\sum\limits_{t\mid n}N_r^{=t}$. Applying M\"obius inversion gives $N_r^{=n}=\sum\limits_{t\mid n}\mu(\frac{n}{t})N_r^{\mid t}$, which is
$$N_{r}^{=n}=\sum_{t\mid n}\mu\!\left(\frac{n}{t}\right)\sum_{\substack{z_1,\dots,z_r\geq 0\\z_1+2z_2+\cdots+rz_r=r}}\prod_{s=1}^r\binom{z_s-1+\sum_{d\mid s}\frac{1}{d}\sum_{e\mid d}\mu\!\left(\frac{d}{e}\right)(p^{te}-1)^2}{z_s}.$$
\end{proof}

\begin{Proposition}\label{supersingular}
Let $k$ be an algebraically closed field of characteristic $p>0$, $X$ a supersingular elliptic curve over $k$, $x\in X(k)$, $n\in\mathbb{N}_{>0}$, $N_r^{\mid n}(\text{resp. }N_r^{=n})$ denote the number of isomorphism classes of rank $r$ vector bundles $E$ on $X$ satisfying $(F_X^n)^*E\cong E($resp. whose minimal Frobenius period is exactly $n)$. Then we have the following generating series
\[
    \sum_{r\geq 0} N_r^{\mid n}T^r
    =
    \prod_{d\geq 1}
    \left(1-T^{d}\right)^{-\frac{1}{d}\sum_{e\mid d}\mu\!\left(\frac{d}{e}\right)(p^{en}-1)^2}.
\]
Here $\mu$ is the classical M\"obius function. Consequently, we have 
$$N_r^{\mid n}=\sum_{\begin{smallmatrix}
    z_1,\dots,z_r\geq0\\
    z_1+2z_2+\cdots+rz_r=r
\end{smallmatrix} }\prod_{d=1}^r\binom{z_d-1+\frac{1}{d}\sum_{e\mid d}\mu\!\left(\frac{d}{e}\right)(p^{en}-1)^2}{z_d},$$
$$N_{r}^{=n}=\sum_{t\mid n}\mu\!\left(\frac{n}{t}\right)\sum_{\substack{z_1,\dots,z_r\geq 0\\z_1+2z_2+\cdots+rz_r=r}}\prod_{d=1}^r\binom{z_d-1+\frac{1}{d}\sum_{e\mid d}\mu\!\left(\frac{d}{e}\right)(p^{te}-1)^2}{z_d}.$$
\end{Proposition}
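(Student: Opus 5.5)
The proof follows the ordinary case (Proposition~\ref{ordinary}) with the $\mathbb Z_p$ factor deleted. Since $X$ is supersingular, $\pi_1^{\acute{e}t}(X,x)\cong\prod_{\ell\neq p}\mathbb Z_\ell^2$. Let $\rho:\pi_1^{\acute{e}t}(X,x)\to\GL_r(k)$ be any continuous representation, with $\GL_r(k)$ discrete. Its kernel is open, so the image is a finite abelian group $A$ of order prime to $p$. The argument in the proof of the preceding proposition then applies verbatim:
\begin{itemize}
\item every $\rho(a)$ is annihilated by the separable polynomial $x^{\ord(a)}-1$, so it is diagonalizable;
\item these matrices commute, so they are simultaneously diagonalizable.
\end{itemize}
Hence every representation is a direct sum of characters $\chi\in\Lambda:=\Hom_{\mathrm{cont}}(\prod_{\ell\neq p}\mathbb Z_\ell^2,k^*)$. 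The indecomposables are exactly the one-dimensional representations, in bijection with $\Lambda$. By Krull--Schmidt, a rank $r$ representation up to isomorphism is the same as a multiset of $r$ elements of $\Lambda$.

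Next we use Lemma~\ref{LaStequivalence} and Lemma~\ref{Frobenius}, which translate $(F_X^n)^*E_\rho\cong E_\rho$ into $\rho^{(p^n)}\simeq\rho$. We also have $(\bigoplus\chi_i)^{(p^n)}=\bigoplus\chi_i^{(p^n)}$. So the condition says the multiset $\{\chi_i\}$ is stable under $\sigma_n:\chi\mapsto\chi^{(p^n)}$, with multiplicities preserved. Since the multiset is finite, every $\chi_i$ has finite period under $\sigma_n$. The multiset is therefore a union of full $\sigma_n$-orbits, each orbit $\Omega$ occurring with some multiplicity $k_\Omega\ge 0$. An orbit of length $d$ contributes rank $d$.

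To count orbits, note that $\chi\in\Lambda^{\sigma_n^e}$ exactly when $\chi$ takes values in $\mathbb F_{p^{ne}}^*$. Applying Lemma~\ref{Numberofcharacter} to each of the two factors $\prod_{\ell\neq p}\mathbb Z_\ell$ gives $|\Lambda^{\sigma_n^e}|=(p^{ne}-1)^2$. Let $a_n(d)$ be the number of orbits of length $d$. Then
\[(p^{ne}-1)^2=\sum_{d\mid e}d\,a_n(d),\]
and Möbius inversion gives $a_n(d)=\frac1d\sum_{e\mid d}\mu(d/e)(p^{en}-1)^2$, exactly as in the ordinary case. Each orbit of length $d$ contributes a factor $1+T^d+T^{2d}+\cdots=(1-T^d)^{-1}$ to the generating function. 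This yields
\[\sum_r N_r^{\mid n}T^r=\prod_{d\ge1}(1-T^d)^{-a_n(d)}.\]
This is the ordinary formula with only $m=1$ present, reflecting that $\mathbb Z_p$ now has only the trivial representation.

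To extract coefficients, we expand each factor with the negative binomial series,
\[(1-T^d)^{-a}=\sum_{z\ge0}\binom{z-1+a}{z}T^{dz}.\]
The coefficient of $T^r$ is then the sum over $(z_1,\dots,z_r)$ with $\sum d z_d=r$ of $\prod_{d=1}^r\binom{z_d-1+a_n(d)}{z_d}$, which is the stated formula for $N_r^{\mid n}$.

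For the exact-period count, $N_r^{\mid n}=\sum_{t\mid n}N_r^{=t}$. This holds because $\{n:(F_X^n)^*E\cong E\}$ is the set of positive multiples of the minimal period of $E$: if $(F_X^a)^*E\cong E$ and $(F_X^b)^*E\cong E$, then $(F_X^{\gcd(a,b)})^*E\cong E$ by the usual subtraction argument. Möbius inversion over divisors of $n$ then gives the formula for $N_r^{=n}$.

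The main point needing care is the multiset-of-orbits step. It rests on Krull--Schmidt uniqueness of the decomposition into characters together with the identification $\rho^{(p^n)}=\bigoplus\chi_i^{(p^n)}$. Once these are checked, the remainder is bookkeeping parallel to the ordinary case.
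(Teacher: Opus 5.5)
Your proposal is correct and follows essentially the same route as the paper. You decompose every representation into characters (the image is abelian of order prime to $p$, hence diagonalizable), read $\rho^{(p^n)}\simeq\rho$ as stability of the character multiset under $\sigma_n$, count orbits by M\"obius inversion from $|\Lambda^{\sigma_n^e}|=(p^{ne}-1)^2$, and expand $\prod_{d}(1-T^d)^{-a_n(d)}$. Your explicit justifications of the uniqueness of the decomposition and of $N_r^{\mid n}=\sum_{t\mid n}N_r^{=t}$ fill in steps the paper leaves implicit; the gcd argument works because the Frobenius twist is invertible on representations.
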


\begin{proof}
Since $X$ is a supersingular elliptic curve, its
étale fundamental group is
\[
\pi_1^{\acute{e}t}(X,x)\simeq
\prod_{\ell\neq p}\mathbb Z_\ell^2.
\]
Then for any nontrivial $k$-representation $(V,\rho)$ of $\pi_1^{\acute{e}t}(X,x)$, since $\Ima(\rho)$ is a finite abelian group with $p\nmid |\Ima(\rho)|$, we have that $V$ has a direct sum decomposition of subrepresentations of dimension $1$. Therefore, there exists a decomposition $V=\oplus_{\chi\in \Lambda}k_\chi V_{\chi}$, where $\Lambda:=\Hom_{\mathrm{cont}}(\prod_{\ell\neq p}\mathbb Z_\ell^2,k^*)$, $k_\chi\in\mathbb{N}$, $(V_\chi,\rho|_{V_\chi})$ satisfies $\rho(a) v=\chi(a)v$ for any $a\in \prod_{\ell\neq p}\mathbb Z_\ell^2$ and for any $v\in V_\chi$, and $V_\chi$ is a $1$-dimensional representation if $V_\chi\neq0$.
 
Fix a positive integer $n$, define $\sigma_n:\Lambda\rightarrow \Lambda, \chi\mapsto \chi^{(p^n)}$. For $\chi\in\Lambda$, define the period of $\chi$ under $\sigma_n$ by $\peri_{\sigma_n}(\chi)$ to be the minimal positive integer $d$ such that $\chi=\chi^{(p^{nd})}$. For $\chi\in\Lambda$ of period $d$, the orbit $\orb_{{\sigma_n}}(\chi)$ of $\chi$ under $\sigma_n$ is $\{\chi,\chi^{(p^{n})},\cdots, \chi^{(p^{n(d-1)})}\}$ and we define $\OBT_{\sigma_n}(d):=\{\orb_{\sigma_n}(\chi)|\chi\in \Lambda,\,\peri_{\sigma_n}(\chi)=d\}$. Let $a_n(d)$ be the number of orbits of length exactly $d$ under $\sigma_n$. 

Suppose $(V,\rho)$ satisfies that $\rho^{(p^n)}\simeq \rho$. Similar to the proof of Proposition~\ref{ordinary}, we have the following decomposition
$$V= \bigoplus_{d\in\mathbb{N}_{>0}}\bigoplus_{\Omega\in\OBT_{\sigma_n}(d)}k_\Omega W_\Omega$$
such that $k_\Omega\in \mathbb{N}$, $\Omega=\orb_{\sigma_{n}}(\chi)$ for some $\chi\in \Lambda$ and 
$$W_\Omega=V_\chi\oplus V_{\chi^{(p^{n})}}\oplus\cdots \oplus V_{\chi^{(p^{(\peri_{\sigma_n}(\chi)-1)n})}}.$$
Then $k_{\Omega}\neq 0$ if and only if $V_\chi$ corresponds to some nontrivial indecomposable $k$-subrepresentation of $(V,\rho)$. Since there are $a_n(d)$ orbits of length $d$ under $\sigma_n$ in $\Lambda$, the total generating function is
\[
\sum_{r\geq 0}N_r^{\mid n}T^r=\prod_{d\geq 1}(1+T^{d}+T^{2d}+\cdots)^{a_n(d)}=\prod_{d\geq 1}(1-T^{d})^{-a_n(d)}.
\]
By the binomial expansion, $(1-T^d)^{-a_n(d)}=\sum\limits_{z\geq 0}\binom{z-1+a_n(d)}{z}T^{dz}$. Hence we have
$$\prod_{d\geq 1}(1-T^d)^{-a_n(d)}=\prod_{d\geq 1}\sum_{z\geq 0}\binom{z-1+a_n(d)}{z}T^{dz}.$$
Therefore, the coefficient of $T^r$ is 
$$N_r^{\mid n}=\sum_{\begin{smallmatrix}
    z_1,\dots,z_r\geq0\\
    z_1+2z_2+\cdots+rz_r=r
\end{smallmatrix} }\prod_{d=1}^r\binom{z_d-1+a_n(d)}{z_d}.$$
Note that $N_r^{\mid n}=\sum\limits_{t\mid n}N_r^{=t}$. Applying M\"obius inversion gives $N_r^{=n}=\sum\limits_{t\mid n}\mu(\frac{n}{t})N_r^{\mid t}$, which is
$$N_{r}^{=n}=\sum_{t\mid n}\mu\!\left(\frac{n}{t}\right)\sum_{\substack{z_1,\dots,z_r\geq 0\\z_1+2z_2+\cdots+rz_r=r}}\prod_{d=1}^r\binom{z_d-1+\frac{1}{d}\sum_{e\mid d}\mu\!\left(\frac{d}{e}\right)(p^{te}-1)^2}{z_d}.$$
\end{proof}

\begin{Corollary}\label{Cor}
    Let $k$ be an algebraically closed field of characteristic $p>0$, $X$ an elliptic curve over $k$, $n\in\mathbb{N}_{>0}$. Then we have  
    $$|\mathrm{Pic}^0(X)[p^n-1]|={(p^{n}-1)^2}.$$
\end{Corollary}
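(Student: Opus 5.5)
The plan is to read off the rank-one case of Proposition~\ref{ordinary} and Proposition~\ref{supersingular}, and then to identify rank-one Frobenius-periodic bundles with the torsion points of $\mathrm{Pic}^0(X)$.

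\textbf{Step 1: the count $N_1^{\mid n}$.} Set $r=1$. The only tuple with $z_1+2z_2+\cdots+rz_r=1$ is $z_1=1$, so both formulas for $N_1^{\mid n}$ collapse to the same expression
\[
\binom{a_n(1)}{1}=a_n(1).
\]
By the M\"obius formula, $a_n(1)=\mu(1)(p^n-1)^2=(p^n-1)^2$. Directly, $a_n(1)$ counts the characters $\chi\in\Lambda$ fixed by $\sigma_n$, which is $|\Lambda^{\sigma_n}|=(p^n-1)^2$ by Lemma~\ref{Numberofcharacter} applied to the two factors. So in either case $N_1^{\mid n}=(p^n-1)^2$.

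\textbf{Step 2: identify the line bundles counted.} For a line bundle $L$ we have $F_X^*L\cong L^{\otimes p}$. Hence
\[
(F_X^n)^*L\cong L \iff L^{\otimes p^n}\cong L \iff L^{\otimes(p^n-1)}\cong\mathcal O_X .
\]
Such an $L$ has degree $0$, since $(p^n-1)\deg L=0$. Conversely, every $L\in\mathrm{Pic}^0(X)[p^n-1]$ satisfies $(F_X^n)^*L\cong L$.

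It remains to check that $N_1^{\mid n}$ counts all such $L$, and not only the étale trivializable ones. By Lange--Stuhler \cite{LaSt77}, any bundle fixed by an iterate of Frobenius is étale trivializable. One can also see this directly: $L$ is trivialized by the étale cyclic cover attached to the $(p^n-1)$-torsion, which is étale because $p\nmid p^n-1$. So by Lemma~\ref{LaStequivalence} these $L$ are exactly the rank-one objects counted by $N_1^{\mid n}$, and therefore
\[
|\mathrm{Pic}^0(X)[p^n-1]|=N_1^{\mid n}=(p^n-1)^2 .
\]

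\textbf{Main obstacle.} The delicate point is Step 2, namely that the counted objects are exactly $\mathrm{Pic}^0(X)[p^n-1]$ with nothing extra. In rank one a representation is just a continuous character. For an ordinary curve, the $\mathbb Z_p$ factor contributes only the trivial character, by the $p$-part argument of the unnumbered lemma on $\Hom_{\mathrm{cont}}(\hat{\mathbb Z},k^*)$. Hence rank-one representations are exactly the elements of $\Lambda$, with no unipotent contribution. As a cross-check, the result agrees with the classical fact $A[m]\cong(\mathbb Z/m\mathbb Z)^{2}$ for $p\nmid m$ quoted in the introduction.
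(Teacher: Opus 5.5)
Your proposal is correct and follows the paper's route: the paper also identifies $(F_X^n)^*L\cong L$ with $L\in\mathrm{Pic}^0(X)[p^n-1]$ via $L^{\otimes p^n}\cong L$, and then reads off $N_1^{\mid n}=(p^n-1)^2$ from Propositions~\ref{ordinary} and~\ref{supersingular}. Your extra checks fill in details the paper leaves implicit: that only $z_1=1$ contributes, so $N_1^{\mid n}=a_n(1)$; that such $L$ have degree $0$; and that they are \'etale trivializable, so Lemma~\ref{LaStequivalence} applies.
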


\begin{proof}
    Let $F_X:X\rightarrow X$ be the absolute Frobenius morphism, $L$ a line bundle on $X$. Then 
    $$F_X^{n*} L\cong L\Leftrightarrow L^{\otimes p^n}\cong L \Leftrightarrow L\in \mathrm{Pic}^0(X)[p^n-1].$$
    
    If $X$ is ordinary, then by Proposition~\ref{ordinary},
    $$|\mathrm{Pic}^0(X)[p^n-1]| =N_{1}^{|n}={(p^{n}-1)^2}.$$
    
    If $X$ is supersingular, then by Proposition~\ref{supersingular},
    $$|\mathrm{Pic}^0(X)[p^n-1]| =N_{1}^{|n}={(p^{n}-1)^2}.$$
    
\end{proof}


\end{document}